\UseRawInputEncoding
\documentclass[11pt]{article}
\usepackage{amsmath}
\usepackage{amssymb, amscd, amsthm,amsfonts}
\usepackage[all]{xy}
\usepackage[dvips]{graphicx}
\usepackage{verbatim}
\usepackage[perpage,symbol*]{footmisc}
\newtheorem{theorem}{Theorem}
\newtheorem{lemma}{Lemma}

\newtheorem{corollary}{Corollary}
\newtheorem{conjecture}{Conjecture}

\newtheorem{exam}{Example}

\begin{document}
	
	\pagestyle{myheadings}

	\title{\bf Girth of the algebraic bipartite graph $D(k,q)$ \footnote{This work was supported by the National Natural Science Foundation of China (No. 61977056).}}
	\author{Ming Xu, Xiaoyan Cheng and Yuansheng Tang\footnote{Corresponding author.}\\
		{\it\small School of Mathematical Sciences, Yangzhou University, Jiangsu, China\footnote{Email addresses: mx120170247@yzu.edu.cn(M. Xu), xycheng@yzu.edu.cn(X. Cheng), ystang@yzu.edu.cn(Y. Tang)}}
	}
	\date{}
	\maketitle
	
	{\noindent\small{\bf Abstract:}
		For integer $k\geq2$ and prime power $q$, the algebraic bipartite graph $D(k,q)$ proposed by Lazebnik and Ustimenko (1995) is meaningful not only in extremal graph theory but also in coding theory and cryptography. This graph is $q$-regular, edge-transitive and of girth at least $k+4$.
		Its exact girth $g=g(D(k,q))$ was conjectured in 1995 to be $k+5$ for odd $k$ and $q\geq4$.
		This conjecture was shown to be valid in 2016 when $\frac{k+5}{2}|_p(q-1)$, where $p$ is the characteristic of $\mathbb{F}_q$ and $m|_pn$ means that $m$ divides $p^r n$ for some nonnegative integer $r$. In this paper, for $t\geq 1$ we prove that
(a) $g(D(4t+2,q))=g(D(4t+1,q))$;
(b) $g(D(4t+3,q))=4t+8$ if $g(D(2t,q))=2t+4$;
(c) $g(D(8t,q))=8t+4$ if $g(D(4t-2,q))=4t+2$;
(d) $g(D(2^{s+2}(2t-1)-5,q))=2^{s+2}(2t-1)$ if $p\geq 3$, $(2t-1)|_p(q-1)$ and $2^s\|(q-1)$.
	A simple upper bound for the girth of $D(k,q)$ is proposed in the end of this paper. }
	
	\vspace{1ex}
	{\noindent\small{\bf Keywords:}
		Bipartite graph; Edge-transitive; Backtrackless walk; Girth; Homogeneous polynomial;}
	
\section{Introduction}
The graphs considered in this paper are undirected, without loops and multiple edges.
   A graph $G$ is said to be {\it edge-transitive} provided, for any two edges $e_1$, $e_2$ of $G$, that there exists an automorphism $\phi$ of $G$ such that $\phi$ maps the ends of $e_1$ into those of $e_2$.
   A {\it backtrackless} (or {\it non-recurrent}) walk of length $n$ means a sequence $v_1,v_2,\ldots,v_n$ of vertices of $G$ such that any two consecutive vertices are adjacent in $G$ and $v_j\neq v_{j+2}$ for $j=1,2,\ldots,n-2$.
   A backtrackless walk $v_1,v_2,\ldots,v_n$ is called a {\it backtrackless circuit} of length $n$ further if $n$ is greater than 2 and $v_3,v_4,\ldots,v_n,v_1,v_2$ is still a backtrackless walk.
   For any graph $G$ which is not a tree, its girth, denoted by $g(G)$, is equal to the length of the shortest
   backtrackless circuits in $G$.

   Graphs with large girth and a high degree of symmetry have been applied to variant problems in extremal graph theory, finite geometry, coding theory, cryptography, communication networks and quantum computations (c.f. \cite{Wenger91}--\cite{Dehghan20}).
   In this paper, we concetrate on the algebraic bipartite graph $D(k,q)$, proposed by Lazebnik and Ustimenko in \cite{Lazebnik95}, which is edge-transitive and of girth at least $k+4$, where $k\geq2$ and $q=p^m$ is a power of prime $p$. The graph $D(k,q)$ has been investigated quite well in literature (c.f. \cite{Lazebnik95}--\cite{XWY16}).
   For the exact girth of $D(k,q)$, the following conjecture was proposed in \cite{Fredi95}:
   \begin{conjecture}\label{conmain}
     $D(k,q)$ has girth $k+5$ for all odd $k$ and all $q\geq4$.
   \end{conjecture}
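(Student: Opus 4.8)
The plan is to separate the two inequalities hidden in the statement ``girth $=k+5$''. Since $D(k,q)$ is bipartite, every one of its backtrackless circuits has even length; combined with the known lower bound $g(D(k,q))\ge k+4$, for odd $k$ the number $k+4$ is odd and hence cannot itself be a circuit length, so automatically
\[
g(D(k,q))\ge k+5\qquad(k\text{ odd}).
\]
Thus the entire content of the conjecture for odd $k$ reduces to the upper bound $g(D(k,q))\le k+5$, i.e.\ to \emph{exhibiting at least one backtrackless circuit of length $k+5$} for every odd $k$ and every $q\ge4$. I would make this the sole target.

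To construct such a circuit I would encode closed walks algebraically. Fix a base edge (legitimate by edge-transitivity) and read off a closed walk of length $k+5$, alternating point-line-point-$\cdots$, by its sequence of \emph{free first coordinates} $a_1,b_1,a_2,b_2,\dots$, since at each point-to-line (resp.\ line-to-point) step the first coordinate of the newly reached vertex may be chosen freely while the remaining $k-1$ coordinates are then forced by the defining incidence equations of $D(k,q)$. Imposing that the walk return to its starting vertex turns the matching of the higher coordinates into a system of polynomial equations in $a_1,b_1,\dots$, while backtracklessness imposes the open conditions $a_i\ne a_{i+1}$, $b_i\ne b_{i+1}$. Because the incidence relations are bilinear and built by a recursive coordinate shift, these closing equations are governed by symmetric functions (power sums) of the parameters---this is the origin of the homogeneous polynomials flagged in the keywords---so the whole question becomes: \emph{does this explicit system have an $\mathbb{F}_q$-solution with the required distinctness?}

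The remaining, and decisive, step is to prove solvability for \emph{all} $q\ge4$. The 2016 result obtained a solution under the arithmetic hypothesis $\tfrac{k+5}{2}\,|_p\,(q-1)$, which guarantees that a governing power-sum equation of the form $x^{(k+5)/2}=c$ (or a $p$-power twist of it) has a root in $\mathbb{F}_q^{\times}$; removing this hypothesis is exactly what is needed. I would pursue two complementary routes. First, an algebraic-geometric route: show that the variety $V$ cut out by the closing equations is geometrically irreducible of positive dimension, so that the Lang--Weil and Weil estimates force $V(\mathbb{F}_q)\ne\emptyset$ (indeed with about $q^{\dim V}$ points) once $q$ exceeds an explicit bound, and then dispose of the finitely many small $q\ge4$ by direct computation. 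Second, a recursive route suggested by parts (b)--(d) of the theorem: those identities express an odd-index girth through the girth of a \emph{lower, even-index} graph attaining its minimal value (for instance $g(D(4t+3,q))=4t+8$ once $g(D(2t,q))=2t+4$), so the odd-$k$ conjecture can be bootstrapped from statements of the form ``$g(D(m,q))=m+4$'', and I would set up an induction on $k$ whose inductive step is one of these reductions.

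The hard part is the uniform solvability across all $q\ge4$, not any single construction. The partial results (a)--(d) already show that the shape of a working solution depends sensitively on $k\bmod4$ and on the $p$-adic and $2$-adic structure of $q-1$, so no single closed-form family of circuits can plausibly cover every case; and the genuine exceptions at $q=2,3$ mean a purely asymptotic ``large $q$'' argument cannot finish the job. Concretely, the obstruction is to certify that the governing homogeneous power-sum system always acquires an $\mathbb{F}_q$-point with pairwise distinct coordinates---equivalently, to rule out the degenerate configurations that, for small or arithmetically special $q$, could push the girth up to $k+7$. Controlling these degeneracies uniformly, whether through geometric irreducibility of $V$ or through a clean master recursion on the index $k$, is where I expect essentially all of the difficulty to concentrate.
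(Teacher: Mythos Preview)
The statement you were given is \emph{Conjecture~1}, not a theorem: the paper does not prove it, and indeed it remains open. There is therefore no ``paper's own proof'' to compare against. What the paper does prove are the partial results (a)--(d) listed in the abstract (Theorems~\ref{4t+1&4t+2}, \ref{relation2s&4s+3(4)}, \ref{theo50}), each of which establishes the conjecture only for specific infinite families of pairs $(k,q)$; the concluding remarks explicitly describe the conjecture as still unresolved in general.

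Your proposal is not a proof but a research outline, and you are candid about this: you locate the entire difficulty in the ``uniform solvability across all $q\ge4$'' of the closing system, and then sketch two possible attacks (Lang--Weil plus finite check; recursive bootstrapping via (b)--(d)) without carrying either out. Both sketches have real obstructions. For the Lang--Weil route, the variety $V$ depends on $k$, so ``finitely many small $q$'' is not finite once $k$ ranges over all odd integers---you would need irreducibility and a point-count bound uniform in $k$, which is exactly what no one has. For the recursive route, statements (b)--(d) each \emph{consume} an even-index hypothesis of the form $g(D(m,q))=m+4$ to produce a single odd-index conclusion; they do not chain into an induction covering all odd $k$, because the even-index base cases are themselves unproved in general (and are not implied by the odd-$k$ conjecture). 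So neither route, as stated, closes the gap, and the paper's authors do not claim otherwise.

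Your reduction of the problem to the upper bound via bipartiteness is correct and standard; the algebraic encoding of closed walks by the $\rho_s$ polynomials is exactly the paper's framework (Lemma~\ref{path}). But the decisive step---producing a length-$(k+5)$ circuit for every odd $k$ and every $q\ge4$---is genuinely missing, because it is genuinely unknown.
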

   \noindent
   This conjecture was shown to be valid in \cite{Fredi95} for the case that $(k+5)/2$ is a factor of $q-1$, in \cite{XWY14} for the case that $(k+5)/2$ is a power of $p$, and in \cite{XWY16} for the case that $(k+5)/2$ is a factor of $q-1$ multiplied by a power of $p$, respectively.
   For a few small $k$'s, the girth cycles (namely, the shortest backtrackless circuits) of $D(k,q)$ are determined completely in \cite{XCT22}.

   In this paper, we will investigate the girth of $D(k,q)$ further by means of a compact expression of some backtrackless walks of the bipartite graph
$\Lambda_{k,q}$, which is defined as follows (c.f. \cite{XWY14}, \cite{XWY16} and \cite{XCT22}). The left part of vertices of $\Lambda_{k,q}$, denoted by $L_k$, is
   the set of $(k+1)$-dimensional vectors $[l]=(l_0,l_1,l_2,\ldots,l_k)$ over $\mathbb{F}_q$ with $l_1=l_2$.
   The right part of vertices of $\Lambda_{k,q}$, denoted by $R_k$, is the set of $(k+1)$-dimensional vectors $\langle r \rangle=(r_0,r_1,r_2,\ldots,r_k)$ over $\mathbb{F}_q$ with $r_1=0$. Two vertices $(l_0,l_1,\ldots,l_k)\in L_k$ and $(r_0,r_1,\ldots,r_k)\in R_k$ are adjacent in $\Lambda_{k,q}$ if and only if, for $2\leq i \leq k$,
   \begin{align}\label{adj_con}
      l_i+r_i=\begin{cases}
         r_0l_{i-2} \text{ ~~if } i\equiv2,3 \mod 4,\\
         l_0r_{i-2} \text{ ~~if } i\equiv0,1 \mod 4.
      \end{cases}
   \end{align}
Since $\Lambda_{k,q}$ is isomorphic to $D(k,q)$ \cite{XWY14}, $\Lambda_{k,q}$ is also edge-transitive and of girth at least $k+4$.
All the consequent arguments will be made on the graph $\Lambda_{k,q}$ instead of the original graph $D(k,q)$.

   This paper is arranged as follows.
   In Section~2 we introduce a class of homogeneous polynomials in several indeterminates and a compact expression for vertices over some backtrackless walks in $\Lambda_{k,q}$. An identity on such polynomials is shown in Section~3. By using of this identity, in Section~3 we show, for any $t\geq 1$, that each backtrackless circuit in $\Lambda_{4t+1,q}$ implies
   a backtrackless circuit in $\Lambda_{4t+2,q}$ of the same length, and $g(\Lambda_{4t+2,q})=g(\Lambda_{4t+1,q})$ is then deduced.
   In Section~4, we construct some backtrackless circuits in $\Lambda_{4t+3,q}$ by using those in $\Lambda_{2t,q}$, and show for $n\geq 3$ that $g(\Lambda_{4t+3,q})\leq 4n$ if $g(\Lambda_{2t,q})\leq 2n$. A few results on the exact girth of $\Lambda_{k,q}$ are also given in this section.
   In Section~5, we deduce an upper bound for the girth of $\Lambda_{k,q}$ by combining a known result from \cite{XWY16} and the new results shown in Section~4.
   Some concluding remarks are given in Section~6.
   	
\section{Backtrackless Walks in $\Lambda_{k,q}$}	
	At first, we introduce a class of homogeneous polynomials in several indeterminates which were defined in \cite{XWY14}.
For indeterminates $\omega_1,\ldots,\omega_n$ whose values are usually limited to the set $\mathbb{F}_q^*$, let
	$$ \rho_0(\omega_1,\ldots,\omega_n)=\omega_1\cdots\omega_n$$
	and, for $1 \leq s\leq \lfloor\frac{n}{2}\rfloor$, let $\rho_s(\omega_1,\ldots,\omega_n)$ denote the homogeneous polynomial of order $n-2s$ defined by
	$$ \rho_s(\omega_1,\ldots,\omega_n)=\sum_{1\leq i_1<\cdots<i_s\leq n-s}\frac{\prod_{j=1}^n\omega_j}{\prod_{j=1}^s\omega_{i_j+j-1}\omega_{i_j+j}},$$
	where each term in the summation is a product of the remaining elements in the sequence $\omega_1,\ldots,\omega_n$ after deleting from it $s$ disjoint pairs $\{\omega_i,\omega_{i+1}\}$ of consecutive elements.
	If $n<2s$ or $s<0$, $\rho_s(\omega_1,\ldots,\omega_n)$ is defined as 0. For the null sequence $\eta$, we define $\rho_s(\eta)$ as
	\begin{align*}
		\rho_s(\eta)=\begin{cases}
			1 \text{ ~~if } s=0,\\
			0 \text{ ~~if } s\neq 0.
		\end{cases}
	\end{align*}
One can show easily (c.f. \cite{XWY14}, \cite{XWY16} and \cite{XCT22})
	\begin{align}\label{rhoproperty}
		\rho_s(\omega_1,\ldots,\omega_n)=\rho_{s-1}(\omega_1,\ldots,\omega_{n-2})+\omega_n\rho_s(\omega_1,\ldots,\omega_{n-1}),
	\end{align}
	and, for $0\leq j\leq n$,
   \begin{align}
      \rho_{n-j}(\omega_1,\ldots,\omega_{2n})=\sum_{1\leq s_1\leq t_1<s_2\leq t_2<\cdots < s_j\leq t_j\leq n}
         \prod_{k=1}^j\omega_{2s_k-1}\omega_{2t_k},\label{0b0}
   \end{align}
   \begin{align}
   &\rho_{n-j}(\omega_1,\ldots,\omega_{2n+1})=\sum_{s=j}^n\rho_{s-j}(\omega_1,\ldots,\omega_{2s})\omega_{2s+1}\nonumber\\
   =&\sum_{1\leq s_0\leq t_1< s_1\leq t_2<s_2\leq \cdots \leq t_j< s_j\leq n+1}
         \omega_{2s_0-1}\prod_{k=1}^j\omega_{2t_k}\omega_{2s_k-1}.\label{0b1}
   \end{align}
For $a,b\in\mathbb{F}_q^*$, let $\omega'_{2s-1}=a\omega_{2s-1}$ and $\omega'_{2s}=b\omega_{2s}$ for $s=1,2,\ldots$, then from (\ref{0b0}) and (\ref{0b1}) we see easily
\begin{gather}\label{0b3}
\rho_{n-j}(\omega'_1,\ldots,\omega'_{2n})=a^jb^j\rho_{n-j}(\omega_1,\ldots,\omega_{2n}),\\
\rho_{n-j}(\omega'_1,\ldots,\omega'_{2n+1})=a^{j+1}b^j\rho_{n-j}(\omega_1,\ldots,\omega_{2n+1}).\label{0b4}
\end{gather}
	
	Since the graph $\Lambda_{k,q}$ is edge-transitive, without loss of generality, we will concentrate on the backtrackless walks which are leading by the two vertices expressed by the all-zero vector.
Let $\Gamma=[l^{(1)}]\langle r^{(1)}\rangle[l^{(2)}]\langle r^{(2)}\rangle\cdots$ be such a given backtrackless walk of $\Lambda_{k,q}$, where $[l^{(1)}]=(0,0,\ldots,0)$ and $\langle r^{(1)}\rangle=(0,0,\ldots,0)$. For $i\geq1$, let $x_i$ and $y_i$ denote the first entries (or colors) of $[l^{(i)}]$ and $\langle r^{(i)}\rangle$, respectively, and write
	\begin{align}\label{relationxy}
		u_i=x_{i+1}-x_{i}, ~~~~~~~v_i=y_{i+1}-y_{i}.
	\end{align}
	Clearly, we have $u_i\neq0$ and $v_i\neq0$. As a refinement of a closed-form expression given in \cite{XWY14} for the backtrackless walks leading by $[l^{(1)}]=(0,0,\ldots,0)$, the following lemma was shown in \cite{XCT22}.
	\begin{lemma}\label{path}
		For any $i\geq 1$ and $j\geq  0$, let $l_j^{(i+1)}$ denote the $(j+1)$-th entries of $[l^{(i+1)}]$. Then, we have
		\begin{align}
			l_{4j}^{(i+1)}&=\rho_{i-j-1}(u_1,v_1,\ldots,u_{i-1},v_{i-1},u_i),\label{4j}\\
			l_{4j+1}^{(i+1)}&=\rho_{i-j-2}(v_1,u_2,\ldots,v_{i-1},u_i),\label{4j+1}\\
			l_{4j+2}^{(i+1)}&=y_{i+1}l_{4j}^{(i+1)}-\rho_{i-j-1}(u_1,v_1,\ldots,u_i,v_i),\label{4j+2}\\
			l_{4j+3}^{(i+1)}&=y_{i+1}l_{4j+1}^{(i+1)}-\rho_{i-j-2}(v_1,u_2\ldots,v_{i-1},u_i,v_i).\label{4j+3}
		\end{align}
	\end{lemma}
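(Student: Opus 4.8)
The statement to prove is Lemma~\ref{path}, which gives closed-form expressions for the entries of the vertices $[l^{(i+1)}]$ appearing along a backtrackless walk in $\Lambda_{k,q}$ that is led by the all-zero vectors. The natural approach is induction on $i$, using the adjacency relations \eqref{adj_con} together with the recurrence \eqref{rhoproperty} for the polynomials $\rho_s$.

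The plan is as follows. \textbf{Base case.} First I would verify the formulas for $i=1$. Here $[l^{(1)}]=\langle r^{(1)}\rangle=(0,\dots,0)$, so $x_1=y_1=0$, hence $x_2=u_1$, $y_2=v_1$. The adjacency condition \eqref{adj_con} between $[l^{(2)}]$ and $\langle r^{(1)}\rangle=(0,\dots,0)$ forces $l_i^{(2)}=-r_i^{(1)}+(\text{term})=0$ for $2\le i\le k$ except where the ``$l_0r_{i-2}$'' or ``$r_0l_{i-2}$'' term kicks in; since $r^{(1)}=0$, in fact $l_i^{(2)}=0$ for $i\ge 2$, while $l_0^{(2)}=x_2=u_1$ and $l_1^{(2)}=l_2^{(2)}=0$. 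Comparing with the claimed formulas: $l_0^{(2)}=\rho_{0}(u_1)=u_1$ (the $i=1,j=0$ case of \eqref{4j}), $l_{4j}^{(2)}=\rho_{-j}(u_1)=0$ for $j\ge 1$, $l_{4j+1}^{(2)}=\rho_{-j-1}(\eta\text{ or }\ldots)=0$, $l_{4j+2}^{(2)}=y_2\cdot l_{4j}^{(2)}-\rho_{-j}(u_1,v_1)$, which is $0-0=0$ for $j\ge 1$ and $u_1 v_2\!\cdot\!0 - \rho_0(u_1,v_1)$... here I must be careful: for $j=0$, $l_2^{(2)}=y_2 l_0^{(2)}-\rho_0(u_1,v_1)=v_1 u_1 - u_1 v_1=0$, consistent. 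So the base case holds after a short check.

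\textbf{Inductive step.} Assume the formulas hold for index $i$. To pass to $i+1$, I would first record the analogous closed-form expressions for the right-hand vertices $\langle r^{(i+1)}\rangle$ — these follow from the bipartite adjacency \eqref{adj_con} applied to the edge $[l^{(i+1)}]\langle r^{(i+1)}\rangle$, and the ``symmetric'' structure of the two parts should give expressions for $r_j^{(i+1)}$ in terms of $u_1,v_1,\ldots,v_i,u_{i+1}$ (note the roles of the $u$'s and $v$'s swap relative to the $l$'s). Then, applying \eqref{adj_con} to the next edge $[l^{(i+2)}]\langle r^{(i+1)}\rangle$ expresses $l_j^{(i+2)}$ in terms of $l_j^{(i+1)}$, $r_j^{(i+1)}$, $l_0^{(i+2)}=x_{i+2}$, $r_0^{(i+1)}=y_{i+1}$, and lower-index entries $l_{j-2}^{(i+2)},r_{j-2}^{(i+1)}$. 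Substituting the inductive closed forms and collapsing the resulting sums via the recurrence \eqref{rhoproperty} — in the form $\rho_s(\omega_1,\dots,\omega_n)=\rho_{s-1}(\omega_1,\dots,\omega_{n-2})+\omega_n\rho_s(\omega_1,\dots,\omega_{n-1})$, applied with the newly appended indeterminates $v_i$ and then $u_{i+1}$ — should produce exactly the claimed formulas at index $i+1$. I would treat the four residue classes $j\equiv 0,1,2,3\pmod 4$ separately, since \eqref{adj_con} and Lemma~\ref{path} both split along these classes; the even-vs-odd-$j$ cases differ in whether the length of the $\rho$-argument list is even or odd, which is why \eqref{0b0}--\eqref{0b1} are stated in two parities.

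\textbf{Main obstacle.} The bookkeeping is the crux: one must keep straight (i) which of $u$ or $v$ gets prepended/appended at each of the two half-steps (from $[l^{(i+1)}]$ to $\langle r^{(i+1)}\rangle$ and from $\langle r^{(i+1)}\rangle$ to $[l^{(i+2)}]$), (ii) the $\pm$ signs coming from $l_i+r_i=\cdots$, and (iii) the shift in the parity class of $i$ modulo $4$ that controls whether the multiplier in \eqref{adj_con} is $r_0l_{i-2}$ or $l_0r_{i-2}$. The cleanest way to manage this is to prove a companion formula for $\langle r^{(i+1)}\rangle$ first as an explicit intermediate lemma, so that each application of \eqref{adj_con} only needs \eqref{rhoproperty} once. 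Since essentially the same closed form was already established in \cite{XWY14} for walks led by $[l^{(1)}]=(0,\dots,0)$ and refined in \cite{XCT22}, I expect no conceptual difficulty beyond this careful index-tracking; the only genuine check is that the $y_{i+1}$-dependent pieces in \eqref{4j+2}--\eqref{4j+3} combine correctly, which is exactly where the $r_0=y_{i+1}$ multiplier in \eqref{adj_con} enters.
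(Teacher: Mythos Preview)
The paper does not give its own proof of Lemma~\ref{path}: the sentence immediately preceding the lemma states that it ``was shown in \cite{XCT22}'' (as a refinement of the closed form in \cite{XWY14}), and no argument is supplied in the present paper. So there is no in-paper proof to compare your proposal against.

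That said, your inductive scheme is the standard and correct one, and it is exactly the route taken in \cite{XWY14,XCT22}: assume the formulas for $[l^{(i+1)}]$, pass through $\langle r^{(i+1)}\rangle$ via the edge relation \eqref{adj_con}, then to $[l^{(i+2)}]$ via the next edge, collapsing each step with the recurrence \eqref{rhoproperty}. One small bookkeeping slip in your sketch: you say the companion formulas for $\langle r^{(i+1)}\rangle$ should be ``in terms of $u_1,v_1,\ldots,v_i,u_{i+1}$'', but $\langle r^{(i+1)}\rangle$ precedes $[l^{(i+2)}]$ in the walk and has colour $y_{i+1}=v_1+\cdots+v_i$, so its entries depend only on $u_1,v_1,\ldots,u_i,v_i$; the new indeterminate $u_{i+1}$ first enters at the step to $[l^{(i+2)}]$. (Relatedly, although \eqref{4j+2}--\eqref{4j+3} are written with $y_{i+1}$ and $v_i$, one application of \eqref{rhoproperty} shows the $v_i$-dependence cancels, consistent with $[l^{(i+1)}]$ being determined before $\langle r^{(i+1)}\rangle$.) With that correction the two half-steps go through without difficulty, and your base case is fine.
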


This lemma shows a compact expression for the vertices on the walk $\Gamma$.	
     For convenience, we say the walk $\Gamma$ is of type $(u_1,v_1,u_2,v_2,\ldots)$. If the first $2i$ vertices in the walk $\Gamma$ form a circuit in $\Lambda_{k,q}$ of length $2i$, we also call it a backtrackless circuit of type $(u_1,v_1,\ldots, u_i,v_i)$.

     We note that
     \begin{align}\label{4j-1}
       \rho_{i-1}(v_1,u_2\ldots,v_{i-1},u_i,v_i)=v_1+\cdots+v_i=y_{i+1}=y_1=0
     \end{align}
is always a necessary condition for $\Lambda_{k,q}$ to have a backtrackless circuit of type $(u_1,v_1,\ldots, u_i,v_i)$.

\section{Backtrackless Circuits in $\Lambda_{4t+2,q}$}	

In this section we show an identity on the homogeneous polynomials introduced in Section~2 at first. By using this identity, we show then that each backtrackless circuit in $\Lambda_{4t+1,q}$ ensures the existence of a backtrackless circuit of the same type in $\Lambda_{4t+2,q}$ and deduce
$g(\Lambda_{4t+2,q})=g(\Lambda_{4t+1,q})$ for any $t\geq 1$ in final.
\begin{lemma}\label{lem4j+1}
For any integers $n,t$ with $n\geq 1$, let
\begin{align}
  \Delta^n_{2t-1}=& \rho_{n-t}(v_1,u_2,\ldots,v_{n-1},u_n,v_n), \label{delta01}\\
  \nabla^n_{2t-1}=& \rho_{n-t}(u_1,v_1,\ldots,u_{n-1},v_{n-1},u_n), \label{delta02}\\
  \Delta^n_{2t}=& \rho_{n-1-t}(v_1,u_2,\ldots,v_{n-1},u_n), \label{delta03}\\
  \nabla^n_{2t}=& \rho_{n-t}(u_1,v_1,\ldots,u_n,v_n). \label{delta04}
\end{align}
Then, for $n\geq 1$ we have
\begin{align}\label{girth4j+1}
\sum_{s}(-1)^{s}\nabla^n_{s}\Delta^n_{2j-s}=0,\text{ for }j\geq 1.
\end{align}
\end{lemma}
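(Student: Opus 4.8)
The plan is to prove the identity (\ref{girth4j+1}) by recognizing that the four quantities $\Delta^n_{\bullet}$, $\nabla^n_{\bullet}$ are precisely the entries of vertices appearing on a generic backtrackless walk, so that the identity should follow from a generating-function factorization of the $\rho_s$-sequences together with the three-term recursion (\ref{rhoproperty}). Concretely, for a sequence $\omega_1,\ldots,\omega_n$ introduce the generating polynomial $P(z)=\sum_{s\geq 0}\rho_s(\omega_1,\ldots,\omega_n)z^{s}$; iterating (\ref{rhoproperty}) shows that appending one variable multiplies by a linear-in-$z$ factor in a continuant-like fashion, so that the two families $\{\nabla^n_s\}_s$ (built on $u_1,v_1,\ldots,u_n$ or $u_1,v_1,\ldots,u_n,v_n$) and $\{\Delta^n_s\}_s$ (built on $v_1,u_2,\ldots$) are the coefficient sequences of two polynomials $N(z)$ and $D(z)$ that are ``reverses'' of each other in an appropriate sense. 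The alternating convolution $\sum_s(-1)^s\nabla^n_s\Delta^n_{2j-s}$ is then the coefficient of $z^{2j}$ in $N(-z)D(z)$ (or $N(z)D(z)$ after a sign bookkeeping), and the claim is that this product is an even polynomial with a very restricted support, forcing the degree-$2j$ coefficient to vanish for $j\geq 1$.

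The key steps, in order, would be: (i) record the two specializations of (\ref{rhoproperty}) I need, namely the even-index and odd-index versions obtained by grouping the variables two at a time, and derive from them a clean recursion in $n$ for the pair $(\Delta^n_{2t-1},\Delta^n_{2t})$ and for $(\nabla^n_{2t-1},\nabla^n_{2t})$ — these will look like transfer-matrix (continuant) recursions with $2\times 2$ matrices depending on $(u_n,v_n)$; (ii) assemble the generating functions and show that, up to known scalar factors of the form $\prod u_i$ or $\prod v_i$ (controlled via (\ref{0b3})--(\ref{0b4})), $N(z)$ and $D(z)$ are related by $z\mapsto z^{-1}$ and homogenization; (iii) compute $N(z)D(z)$ and observe that the transfer matrices for $N$ and for $D$ are inverse-transpose (or adjugate) to each other term by term, so the product telescopes and the only surviving coefficients are at $z^0$ (and possibly the top degree), giving (\ref{girth4j+1}) for all $j\geq1$; (iv) alternatively, run a direct induction on $n$: the base case $n=1$ is a two-term check, and the inductive step expands each of $\nabla^n_s$, $\Delta^n_{2j-s}$ via step (i) in terms of the $(n-1)$-data, substitutes, and collapses the resulting four-fold sum using the inductive hypothesis at level $n-1$ for $j$ and $j-1$.

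I expect the main obstacle to be step (iii)/(iv): matching the index shifts so that the cross terms genuinely cancel. The difficulty is that $\nabla$ and $\Delta$ are built on interleaved but differently-offset variable lists ($u_1,v_1,u_2,\ldots$ versus $v_1,u_2,v_2,\ldots$), so the two transfer-matrix recursions advance ``out of phase,'' and one must be careful that the product of a $\nabla$-matrix at stage $n$ with a $\Delta$-matrix at stage $n$ is what telescopes — not the naive same-stage pairing. Getting the parity of $(-1)^s$ to align with this phase shift, and correctly handling the boundary (the $v_n$ versus no-$v_n$ asymmetry between even and odd subscripts in (\ref{delta01})--(\ref{delta04})), is where the bookkeeping is most delicate; once the correct pairing is identified, each individual cancellation is a one-line application of (\ref{rhoproperty}). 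A fallback, if the generating-function route gets unwieldy, is the brute-force induction in step (iv), which is longer but entirely mechanical given Lemma~\ref{path} and (\ref{rhoproperty}).
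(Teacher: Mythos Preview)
Your fallback route (step (iv)) is exactly what the paper does: induction on $n$, with the base case $n=1$ checked by hand and the inductive step obtained by expanding each of $\Delta^n_{2t-1},\Delta^n_{2t},\nabla^n_{2t-1},\nabla^n_{2t}$ in terms of the $(n-1)$-data via two applications of (\ref{rhoproperty}). One correction to your sketch: you will not need the inductive hypothesis at $j-1$. After multiplying out, all $v_n$- and $u_nv_n$-weighted cross terms cancel in pairs, the $u_n$-weighted terms telescope to zero under the shift $t\mapsto t+1$, and what remains is precisely $\sum_s(-1)^s\nabla^{n-1}_s\Delta^{n-1}_{2j-s}$, i.e.\ the same $j$ at level $n-1$.

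Your primary route (steps (i)--(iii)) is a reasonable conceptual packaging --- the identity does say that the even part of $N(-z)D(z)$ is the constant $1$, where $N(z)=\sum_s\nabla^n_sz^s$ and $D(z)=\sum_s\Delta^n_sz^s$ --- but the specific mechanism you propose is not quite right. The two underlying variable lists have different lengths ($2n$ versus $2n-1$ and $2n-2$) and different leading variables, so $N$ and $D$ are not ``reverses'' of one another in any direct sense, and the transfer matrices you would extract for the two families are not termwise inverse-transpose. What actually makes the product collapse is the same two-step recursion the paper uses; once you encode that in $2\times2$ matrices you are effectively redoing the paper's induction in matrix form. So the generating-function language buys a cleaner statement but not a shorter proof here.
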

\begin{proof}
From $\Delta^1_0=\nabla^1_0=1$, $\Delta^1_1=v_1$, $\nabla^1_1=u_1$, $\nabla^1_2=u_1v_1$, $\Delta^1_s=\nabla^1_t=0$ for any $s\not\in\{0,1\}$
and $t\not\in\{0,1,2\}$, it can be checked easily that (\ref{girth4j+1}) is valid for $n=1$.

To show that (\ref{girth4j+1}) is valid for $n>1$, we note that for any integer $t$ according to (\ref{rhoproperty}) we have
\begin{align*}
  \Delta^n_{2t-1}
  =&\rho_{n-t-1}(v_1,u_2,\ldots,v_{n-2},u_{n-1},v_{n-1})\\
  &+v_n\rho_{n-t-1}(v_1,u_2,\ldots,v_{n-2},u_{n-1})\\
  &+v_nu_n\rho_{n-t}(v_1,u_2,\ldots,v_{n-2},u_{n-1},v_{n-1})\\
=&\Delta^{n-1}_{2t-1}+v_n\Delta^{n-1}_{2t-2}+v_nu_n\Delta^{n-1}_{2t-3},\\
\nabla^n_{2t-1}=&\rho_{n-t-1}(u_1,v_1,\ldots,u_{n-2},v_{n-2},u_{n-1})\\
&+u_n\rho_{n-t}(u_1,v_1,\ldots,u_{n-1},v_{n-1})\\
=&\nabla^{n-1}_{2t-1}+u_n\nabla^{n-1}_{2t-2},\\
\Delta^n_{2t}=&\rho_{n-2-t}(v_1,u_2,\ldots,v_{n-2},u_{n-1})\\
&+u_n\rho_{n-1-t}(v_1,u_2,\ldots,v_{n-2},u_{n-1},v_{n-1})\\
=&\Delta^{n-1}_{2t}+u_n\Delta^{n-1}_{2t-1},\\
\nabla^n_{2t}=& \rho_{n-t-1}(u_1,v_1,\ldots,u_{n-1},v_{n-1}),\\
&+v_n\rho_{n-t-1}(u_1,v_1,\ldots,u_{n-2},v_{n-2},u_{n-1})\\
&+v_nu_n\rho_{n-t}(u_1,v_1,\ldots,u_{n-1},v_{n-1})\\
=&\nabla^{n-1}_{2t}+v_n\nabla^{n-1}_{2t-1}+v_nu_n\nabla^{n-1}_{2t-2},
\end{align*}
then we get
\begin{align*}
&\sum_{s}(-1)^{s}\nabla^n_{s}\Delta^n_{2j-s}\\
=&\sum_t(\nabla^{n-1}_{2t}+v_n\nabla^{n-1}_{2t-1}+v_nu_n\nabla^{n-1}_{2t-2})(\Delta^{n-1}_{2j-2t}+u_n\Delta^{n-1}_{2j-2t-1})\\
&-\sum_t(\nabla^{n-1}_{2t-1}+u_n\nabla^{n-1}_{2t-2})(\Delta^{n-1}_{2j-2t+1}+v_n\Delta^{n-1}_{2j-2t}+v_nu_n\Delta^{n-1}_{2j-2t-1})\\
=&\sum_t(\nabla^{n-1}_{2t}\Delta^{n-1}_{2j-2t}-\nabla^{n-1}_{2t-1}\Delta^{n-1}_{2j-2t+1})\\
&+u_n\sum_t(\nabla^{n-1}_{2t}\Delta^{n-1}_{2j-2t-1}-\nabla^{n-1}_{2t-2}\Delta^{n-1}_{2j-2t+1})\\
=&\sum_s(-1)^{s}\nabla^{n-1}_{s}\Delta^{n-1}_{2j-s}.
\end{align*}
Therefore, one can show easily by induction that (\ref{girth4j+1}) is valid for any positive integer $n$.
\end{proof}

The following theorem is then a simple corollary of Lemmas~\ref{path} and \ref{lem4j+1}.
\begin{theorem}\label{4t+1&4t+2}
For $t\geq 1$, $\Lambda_{4t+2,q}$ has a backtrackless circuit of type $(u_1,v_1,\ldots,$ $u_i,v_i)$ if and only if $\Lambda_{4t+1,q}$ has a backtrackless circuit of the same type.
In particular, we have $g(\Lambda_{4t+2,q})=g(\Lambda_{4t+1,q})$ for $t\geq 1$.
\end{theorem}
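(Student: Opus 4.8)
The plan is to read off from Lemma~\ref{path} exactly which of the polynomials $\rho_s$ must vanish for a backtrackless walk of type $(u_1,v_1,\ldots,u_i,v_i)$ to close up into a circuit in $\Lambda_{k,q}$, and to compare these conditions for $k=4t+1$ and $k=4t+2$. A backtrackless walk leading by the two all-zero vectors becomes a circuit of length $2i$ precisely when the $(i+1)$-st left vertex $[l^{(i+1)}]$ again equals the all-zero vector, i.e. when $l^{(i+1)}_j=0$ for every $j$ with $0\le j\le k$ (the condition on the right vertex is automatic once $[l^{(1)}]=[l^{(i+1)}]$ and the colors match, by edge-transitivity of $\Lambda_{k,q}$, or can be checked directly from (\ref{adj_con})). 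By Lemma~\ref{path}, for $\Lambda_{4t+1,q}$ the relevant entries run through $l^{(i+1)}_0,l^{(i+1)}_1,\ldots,l^{(i+1)}_{4t+1}$, and for $\Lambda_{4t+2,q}$ one extra entry, namely $l^{(i+1)}_{4t+2}$, is added. So the first step is: \emph{the circuit conditions for $\Lambda_{4t+1,q}$ are a subset of those for $\Lambda_{4t+2,q}$, and the single additional condition is $l^{(i+1)}_{4t+2}=0$}, which by (\ref{4j+2}) with $4j=4t$ reads
\[
 y_{i+1}\,l^{(i+1)}_{4t}-\rho_{i-t-1}(u_1,v_1,\ldots,u_i,v_i)=0 .
\]

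Hence, granting that $\Lambda_{4t+1,q}$ already has a circuit of the given type, we have $l^{(i+1)}_{4t}=\rho_{i-t-1}(u_1,v_1,\ldots,u_{i-1},v_{i-1},u_i)=\nabla^i_{2(i-t)}$ — wait, more carefully, $l^{(i+1)}_{4t}=0$ is among the $\Lambda_{4t+1,q}$ conditions, since $4t\le 4t+1$; and we also have $y_{i+1}=y_1=0$ by (\ref{4j-1}). Therefore the extra term $y_{i+1}l^{(i+1)}_{4t}$ vanishes trivially, and it remains only to show $\rho_{i-t-1}(u_1,v_1,\ldots,u_i,v_i)=0$, i.e. $\nabla^i_{2(i-t)}=0$ in the notation of Lemma~\ref{lem4j+1}. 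The second step is thus to \emph{extract this single identity from Lemma~\ref{lem4j+1}}: taking $n=i$ and a suitable $j$ in (\ref{girth4j+1}), and using that the $\Lambda_{4t+1,q}$ circuit conditions force $\nabla^i_s=0$ for all odd $s$ in the appropriate range (these are exactly the vanishing of $l^{(i+1)}_{4j+1}$, by (\ref{4j+1})) together with $\Delta^i_0=\rho_{i-1}(v_1,u_2,\ldots,v_i)=0$ or the analogous low-order vanishings, the alternating sum in (\ref{girth4j+1}) collapses to a single surviving term which must therefore vanish, and that term is (a nonzero multiple of) $\nabla^i_{2(i-t)}$.

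The converse direction is immediate: any circuit of the given type in $\Lambda_{4t+2,q}$ satisfies in particular all the $\Lambda_{4t+1,q}$ conditions, being a sub-collection of its own, so restricting the same type gives a circuit in $\Lambda_{4t+1,q}$. Once the equivalence of circuit-existence for each type is established, the girth statement $g(\Lambda_{4t+2,q})=g(\Lambda_{4t+1,q})$ follows because the girth is the minimum length $2i$ over all types admitting a circuit, and by edge-transitivity it suffices to consider the all-zero-led walks; the two minima are taken over the same set of admissible types.

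The main obstacle I anticipate is the bookkeeping in the second step: pinning down \emph{which} index $j$ in (\ref{girth4j+1}) isolates $\nabla^i_{2(i-t)}$, and verifying that every other term $(-1)^s\nabla^i_s\Delta^i_{2j-s}$ in that sum indeed contains a factor already known to be zero — the odd-index $\nabla^i_s$ are killed by the vanishing of the $l^{(i+1)}_{4j+1}$ and $l^{(i+1)}_{4j+3}$ entries, but one must also confirm that the even-index partners $\Delta^i_{2j-s}$ with $s$ even are forced to vanish or that those terms are absent for degree reasons (e.g. $\Delta^i_m=0$ when $m>2i$ or $m<0$). Getting the ranges of summation to line up so that exactly one term survives is the delicate point; everything else is a direct appeal to Lemmas~\ref{path} and~\ref{lem4j+1}.
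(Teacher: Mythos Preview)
Your overall plan is exactly the paper's: reduce the question to the single extra coordinate $l^{(i+1)}_{4t+2}$, use $y_{i+1}=0$ to turn that into the vanishing of one $\nabla$, and then kill that $\nabla$ with the alternating identity of Lemma~\ref{lem4j+1}. However, the bookkeeping you flag as ``delicate'' is in fact wrong as written, and getting it right is the whole proof.

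First, the target quantity $\rho_{i-t-1}(u_1,v_1,\ldots,u_i,v_i)$ is $\nabla^i_{2t+2}$ in the notation of Lemma~\ref{lem4j+1}, not $\nabla^i_{2(i-t)}$: from~(\ref{delta04}) one has $\nabla^i_{2s}=\rho_{i-s}(u_1,\ldots,v_i)$, so $i-s=i-t-1$ gives $s=t+1$. Second, $\Delta^i_0$ is \emph{not} zero; by~(\ref{delta03}) it is $\rho_{i-1}(v_1,u_2,\ldots,v_{i-1},u_i)=1$ (the sequence has length $2(i-1)$), and this is precisely why the collapse works. The quantity you wrote, $\rho_{i-1}(v_1,u_2,\ldots,v_i)$, is $\Delta^i_1$, not $\Delta^i_0$. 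Third, the vanishing $l^{(i+1)}_{4j+1}=0$ gives $\Delta^i_{2j+2}=0$, not an odd-indexed $\nabla$; the odd $\nabla^i_{2j+1}$ come from $l^{(i+1)}_{4j}=0$.

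The correct accounting is this: the $\Lambda_{4t+1,q}$ circuit conditions together with $y_{i+1}=0$ translate via Lemma~\ref{path} into $\Delta^i_s=\nabla^i_s=0$ for all $1\le s\le 2t+1$, and additionally $\Delta^i_{2t+2}=0$ (from $l^{(i+1)}_{4t+1}=0$). Now apply~(\ref{girth4j+1}) with $j=t+1$:
\[
0=\sum_{s=0}^{2t+2}(-1)^s\nabla^i_s\,\Delta^i_{2t+2-s}
=\underbrace{\nabla^i_0}_{=1}\underbrace{\Delta^i_{2t+2}}_{=0}
+\sum_{s=1}^{2t+1}(-1)^s\underbrace{\nabla^i_s}_{=0}\Delta^i_{2t+2-s}
+\nabla^i_{2t+2}\underbrace{\Delta^i_0}_{=1},
\]
so $\nabla^i_{2t+2}=0$, which is exactly $l^{(i+1)}_{4t+2}=0$. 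The converse and the girth consequence are as you said.
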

\begin{proof}
     	Assume that there is a backtrackless circuit of type $(u_1,v_1,\ldots,u_i,v_i)$ in $\Lambda_{4t+1,q}$, that is, $$v_1+\cdots+v_i=y_{i+1}=y_1=0$$ and
     $l^{(i+1)}_k=0$ for $0\leq k\leq 4t+1$.
      According to Lemma~\ref{path}, by using the notations defined in Lemma~\ref{lem4j+1} we have $\Delta^i_{2t+2}=0$ and $\Delta^i_k=\nabla^i_k=0$ for $1\leq k\leq 2t+1$.
     	Therefore, from $\Delta^i_0=1$ and Lemma \ref{lem4j+1} we see
     $$l_{4t+2}^{(i+1)}=y_{i+1}l_{4t}^{(i+1)}-\nabla^i_{2t+2}=\sum_{s=0}^{2t+1}(-1)^{s}\nabla^i_s\Delta^i_{2t+2-s}=0,$$
     	and then $\Lambda_{4t+2,q}$ also has a backtrackless circuit of type $(u_1,v_1,\ldots,u_i,v_i)$. On the other hand, we note that
     $\Lambda_{4t+2,q}$ has a backtrackless circuit of type $(u_1,v_1,\ldots,u_i,v_i)$ implies naturally that $\Lambda_{4t+1,q}$ has a backtrackless circuit of the same type. The proof is complete.
\end{proof}

\section{Backtrackless Circuits in $\Lambda_{4s+3,q}$}	

All the arguments given in this section will be based on the existence of backtrackless circuits of type $(u_1,v_1,\ldots,u_{2n},v_{2n})$ with
     	\begin{align}\label{abs_v_eq1}
     		v_{2j-1}=-v_{2j}=1,\,\, j=1,2,\ldots,n,
     	\end{align}
in the graph $\Lambda_{k,q}$. To show the existence of such circuits, we deduce some equalities on the homogeneous polynomials $\rho_s(\cdot,\ldots,\cdot)$ at first.
     \begin{lemma}\label{2n&n-cycle}
For any integer $t$ and tuple $(u_1,v_1,\ldots,u_{2n},v_{2n})$ over $\mathbb{F}_q^{\ast}$ with (\ref{abs_v_eq1}), we have
        \begin{align}
        	&\rho_{2n-2t}(u_1,v_1,\ldots,u_{2n-1},v_{2n-1},u_{2n})\nonumber\\
        	=&(-1)^{t-1}\rho_{n-t}(u_1,\ldots,u_{2n})+(-1)^{t}\rho_{n-t-1}(u_2,\ldots,u_{2n-1}),\label{rho_2n-2t}\\
        	&\rho_{2n+1-2t}(u_1,v_1,\ldots,u_{2n-1},v_{2n-1},u_{2n})\nonumber\\
        	=&(-1)^{t-1}\rho_{n-t}(u_2,\ldots,u_{2n})+(-1)^{t-1}\rho_{n-t}(u_1,\ldots,u_{2n-1}),\\
        	&\rho_{2n-1-2t}(v_1,u_2,\ldots,v_{2n-1},u_{2n})=(-1)^{t}\rho_{n-t-1}(u_2,\ldots,u_{2n-1}),\\
        	&\rho_{2n-2t}(v_1,u_2,\ldots,v_{2n-1},u_{2n})=(-1)^{t-1}\rho_{n-t}(u_2,\ldots,u_{2n}),\\
        	&\rho_{2n-2t}(u_1,v_1,\ldots,u_{2n},v_{2n})=(-1)^{t}\rho_{n-t}(u_1,\ldots,u_{2n}),\\
        	&\rho_{2n+1-2t}(u_1,v_1,\ldots,u_{2n},v_{2n})=(-1)^{t}\rho_{n-t}(u_2,\ldots,u_{2n}),\\
        	&\rho_{2n-2t}(v_1,u_2,\ldots,v_{2n-1},u_{2n},v_{2n})=(-1)^{t}\rho_{n-t}(u_2,\ldots,u_{2n}),\\
        	&\rho_{2n+1-2t}(v_1,u_2,\ldots,v_{2n-1},u_{2n},v_{2n})=0.
        \end{align}
     \end{lemma}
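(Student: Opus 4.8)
The plan is to prove all eight identities simultaneously by induction on $n$, exploiting the recursion (\ref{rhoproperty}) together with the hypothesis (\ref{abs_v_eq1}) which forces $v_{2j-1}=1$ and $v_{2j}=-1$. The base case $n=1$ is a short direct computation: for each of the eight left-hand sides one writes out $\rho_s$ on the explicit short sequence $(u_1,1,u_2)$ or $(1,u_2)$ etc., using the definitions of $\rho_0,\rho_1$ and the null-sequence conventions, and checks it matches the claimed right-hand side (e.g. $\rho_{2}(u_1,1,u_2)=u_1u_2$ for $t=0$ in the first identity, matching $(-1)^{-1}\rho_1(u_1,u_2)+(-1)^0\rho_0(u_2,\ldots,u_1)$ with the empty middle product reading $0$, and so on).

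For the inductive step, I would peel off the last two indeterminates $v_{2n},u_{2n}$ (or $v_{2n-1},u_{2n-1},\ldots$ as the parity of the target sequence dictates) by repeated application of (\ref{rhoproperty}), exactly in the style of the proof of Lemma~\ref{lem4j+1}. The key simplification is that whenever a factor $v_{2j-1}v_{2j}$ appears it collapses to $-1$, and whenever a single $v_i$ survives it is $\pm1$; this is what converts a $\rho$ in $2n$ variables (the $u$'s and $v$'s interleaved) into a signed combination of $\rho$'s in the $n$ variables $u_1,\ldots,u_{2n}$ only. So each identity's LHS, after stripping the tail, becomes a small linear combination of LHS's at level $n-1$ (possibly of several of the eight types, with shifted $t$), and substituting the inductive hypotheses for those collapses everything to the claimed RHS at level $n$ after elementary cancellation and sign bookkeeping. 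The last identity, $\rho_{2n+1-2t}(v_1,u_2,\ldots,u_{2n},v_{2n})=0$, should fall out most cleanly: peeling $v_{2n}$ off gives $\rho_{2n+1-2t}(v_1,u_2,\ldots,u_{2n})$ plus $v_{2n}$ times a lower term, and the hypothesis plus the preceding identities make these two pieces cancel.

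The main obstacle is purely organizational rather than conceptual: the eight identities are mutually entangled under the recursion, so one must fix a consistent order (and a consistent choice of which one or two indeterminates to peel at each stage, respecting whether the sequence ends in a $u$ or a $v$ and whether its length is even or odd) so that every reduction lands on identities already established at the current or lower level of the induction. I would first verify that the directed dependency graph among the eight types is acyclic once $n$ is fixed — equivalently, that one can linearly order the eight statements so that the tail-peeling of statement $j$ at level $n$ uses only statements $1,\ldots,j$ at level $n$ together with all statements at level $n-1$ — and then the inductive step is a routine (if lengthy) chain of substitutions. A secondary nuisance is keeping the index ranges of the hidden summations honest when the order parameter $n-t$ or $n-t-1$ drops below $0$ or exceeds the sequence length, but the convention that $\rho_s=0$ for $s<0$ or $2s>n$ handles all boundary cases automatically, so no separate case analysis is needed there.
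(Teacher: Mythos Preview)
Your inductive strategy via the recursion (\ref{rhoproperty}) is genuinely different from the paper's argument. The paper does \emph{not} induct on $n$: it fixes $n$, plugs the sequence directly into the closed summation formula (\ref{0b1}) (or (\ref{0b0}) for the even-length cases), and then uses the telescoping observation that the partial sums $y_k=v_1+\cdots+v_{k-1}$ take only the values $0$ and $1$ under hypothesis (\ref{abs_v_eq1}). Concretely, each block $\sum_{j_{2s-1}\le \ell<j_{2s+1}}v_\ell$ collapses to $y_{j_{2s+1}}-y_{j_{2s-1}}\in\{-1,0,1\}$, and the product of $2t-1$ such factors is nonzero only for index patterns of alternating parity; sorting those patterns yields exactly the two $\rho$-sums on $(u_1,\ldots,u_{2n})$ that appear on the right. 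This is a one-shot combinatorial identification, with no recursion at all.

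Your route can be made to work, but the difficulty is a bit more than ``organizational''. The eight identities at level $n$ involve the four sequences ending at $u_{2n}$ or $v_{2n}$; a single application of (\ref{rhoproperty}) peels to sequences ending at $u_{2n-1}$ or $v_{2n-1}$, which are \emph{not} among the eight types at level $n$ or at level $n-1$ (whose sequences end at $u_{2n-2}$ or $v_{2n-2}$). So either you must introduce four auxiliary ``half-level'' identities for those odd-indexed tails and prove a system of twelve statements simultaneously, or you must expand the recursion four steps at once to land back on level $n-1$, which produces a linear combination with on the order of a dozen terms and several shifted $t$-indices. Neither is fatal, but your proposal as written does not allocate for it, and your acyclicity check on the eight-vertex dependency graph will fail as stated. (Incidentally, your base-case example is off: $\rho_2(u_1,1,u_2)=0$ since $2\cdot 2>3$; the case you seem to have in mind is $t=1$, where $\rho_0(u_1,1,u_2)=u_1u_2$.) The paper's direct use of (\ref{0b0})--(\ref{0b1}) sidesteps all of this.
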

     \begin{proof}
     	We give a proof only for the equality (\ref{rho_2n-2t}). The others can be proved similarly.
     	
     	It is obvious that (\ref{rho_2n-2t}) is valid if $t\leq 0$ or $t\geq n$. Since for $1\leq s\leq  n$ we have $y_{2s-1}=0$ and $y_{2s}=1$, according to (\ref{0b0}) and (\ref{0b1}), for $1\leq t<n$, we have
     	\begin{align*}
     		&\rho_{2n-2t}(u_1,v_1,\ldots,u_{2n-1},v_{2n-1},u_{2n})\\
     		=&\sum_{1\leq j_1\leq j_2<j_3\leq\cdots\leq j_{4t-2}<j_{4t-1}\leq 2n}u_{j_1}\prod_{s=1}^{2t-1}v_{j_{2s}}u_{j_{2s+1}}\\
     		=&\sum_{1\leq j_1<j_3<\cdots<j_{4t-1}\leq 2n}u_{j_1}\prod_{s=1}^{2t-1}u_{j_{2s+1}}\sum_{j_{2s-1}\leq l<j_{2s+1}} v_{j_{2s}}\\
     		=&\sum_{1\leq j_1<j_3<\cdots<j_{4t-1}\leq 2n}u_{j_1}\prod_{s=1}^{2t-1}u_{j_{2s+1}}(y_{j_{2s+1}}-y_{j_{2s-1}})\\
     		=&(-1)^{t-1}\sum_{1\leq i_1\leq i_2<i_3\leq i_4<\cdots<i_{2t-1}\leq i_{2t}\leq n}\prod_{s=1}^t u_{2i_{2s-1}-1}u_{2i_{2s}}\\
     		&\qquad +(-1)^{t}\sum_{1\leq i_1<i_2\leq i_3<i_4\leq\cdots\leq i_{2t-1}<i_{2t}\leq n}\prod_{s=1}^t u_{2i_{2s-1}}u_{2i_{2s}-1}\\
     		=&(-1)^{t-1}\rho_{n-t}(u_1,\ldots,u_{2n})+(-1)^{t}\rho_{n-t-1}(u_2,\ldots,u_{2n-1}),
     	\end{align*}
        i.e. (\ref{rho_2n-2t}) is valid for $1\leq t<n$.
     \end{proof}

The following lemma shows that, from any backtrackless circuit of length $2n$ in $\Lambda_{2s,q}$, one can construct a backtrackless circuit of length $4n$ in $\Lambda_{4s+3,q}$, for any $s\geq 1$ and $n\geq 3$.
     \begin{lemma}\label{cycle_2s&4s+3}
     	Assume that $s\geq 1$ and $n\geq 3$. The graph $\Lambda_{4s+3,q}$ has a backtrackless circuit of type $(u_1,v_1,\ldots,u_{2n},v_{2n})$ with (\ref{abs_v_eq1}) if and only if $\Lambda_{2s,q}$ has a backtrackless circuit of length $2n$.
     \end{lemma}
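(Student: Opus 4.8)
The plan is to construct an explicit bijection between the two kinds of backtrackless circuits by translating each side into a system of polynomial equations in the colours and matching the two systems through the identities of Lemma~\ref{2n&n-cycle}. By edge-transitivity I may assume every circuit under consideration is led by the all-zero vertices, so that Lemma~\ref{path} applies.

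First I would handle the $\Lambda_{4s+3,q}$ side. Fix a tuple $(u_1,v_1,\ldots,u_{2n},v_{2n})$ over $\mathbb{F}_q^{\ast}$ satisfying (\ref{abs_v_eq1}); then $v_{2j-1}+v_{2j}=0$ makes the necessary condition $v_1+\cdots+v_{2n}=y_{2n+1}=0$ of (\ref{4j-1}) automatic, and every $v_j=\pm1$ is nonzero, so by Lemma~\ref{path} with $i=2n$, $k=4s+3$, such a circuit exists if and only if all $u_j\neq0$ and $l_0^{(2n+1)}=l_1^{(2n+1)}=\cdots=l_{4s+3}^{(2n+1)}=0$. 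Using (\ref{4j})--(\ref{4j+3}) together with $y_{2n+1}=0$, for $0\le j'\le s$ these $4s+4$ equations read
\[
\rho_{2n-j'-1}(u_1,v_1,\ldots,u_{2n-1},v_{2n-1},u_{2n})=\rho_{2n-j'-2}(v_1,u_2,\ldots,v_{2n-1},u_{2n})=0,
\]
\[
\rho_{2n-j'-1}(u_1,v_1,\ldots,u_{2n},v_{2n})=\rho_{2n-j'-2}(v_1,u_2,\ldots,v_{2n-1},u_{2n},v_{2n})=0.
\]
Now I would feed each left-hand side into the appropriate one of the eight identities of Lemma~\ref{2n&n-cycle}, the choice being dictated by the parity of the order $2n-j'-1$ or $2n-j'-2$: each such $\rho$-polynomial then collapses, up to a sign $\pm1$, to a $\rho$-polynomial — or a sum of two of them — in one of the shortened tuples $(u_1,\ldots,u_{2n})$, $(u_1,\ldots,u_{2n-1})$, $(u_2,\ldots,u_{2n})$, $(u_2,\ldots,u_{2n-1})$, while the odd-order members of the last family vanish identically. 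After discarding the identically-true and the duplicated equations, the system reduces to
\[
\rho_{a}(u_1,\ldots,u_{2n})=\rho_{b}(u_2,\ldots,u_{2n-1})=\rho_{c}(u_2,\ldots,u_{2n})=\rho_{d}(u_1,\ldots,u_{2n-1})=0,
\]
with $a,b,c,d$ running over suitable intervals; in particular $l_0^{(2n+1)}=0$ gives, via (\ref{0b1}), $u_1+\cdots+u_{2n}=0$ and $l_1^{(2n+1)}=0$ gives $u_2+u_4+\cdots+u_{2n}=0$.

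On the $\Lambda_{2s,q}$ side, by Lemma~\ref{path} with $i=n$, $k=2s$, a backtrackless circuit of length $2n$ exists iff there is a tuple $(\tilde u_1,\tilde v_1,\ldots,\tilde u_n,\tilde v_n)$ over $\mathbb{F}_q^{\ast}$ with $\tilde v_1+\cdots+\tilde v_n=0$ and $\tilde l_0^{(n+1)}=\cdots=\tilde l_{2s}^{(n+1)}=0$. I would then set $\tilde u_j:=u_{2j-1}$, $\tilde v_j:=u_{2j}$, so that the concatenation $(\tilde u_1,\tilde v_1,\ldots,\tilde u_n,\tilde v_n)$ is literally the tuple $(u_1,u_2,\ldots,u_{2n})$; applying (\ref{4j})--(\ref{4j+3}) with $i=n$ and using $\tilde v_1+\cdots+\tilde v_n=0$, the polynomials $\tilde l_{4j''}^{(n+1)},\tilde l_{4j''+1}^{(n+1)},\tilde l_{4j''+2}^{(n+1)},\tilde l_{4j''+3}^{(n+1)}$ become exactly $\rho_{n-j''-1}(u_1,\ldots,u_{2n-1})$, $\rho_{n-j''-2}(u_2,\ldots,u_{2n-1})$, $-\rho_{n-j''-1}(u_1,\ldots,u_{2n})$, $-\rho_{n-j''-2}(u_2,\ldots,u_{2n})$, and the closing colour condition $\tilde v_1+\cdots+\tilde v_n=0$ is $u_2+u_4+\cdots+u_{2n}=0$. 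Comparing orders with the reduced system above shows the two systems of polynomial equations coincide; the non-vanishing requirements match because $u_j\neq0$ for all $j$ is the same as $\tilde u_j,\tilde v_j\neq0$ for all $j$; and the wrap-around no-backtracking requirements match as well, since $x_{2n}=-u_{2n}\neq0$, $y_{2n}=1\neq0$ correspond to $\tilde x_n=-u_{2n-1}\neq0$, $\tilde y_n=-u_{2n}\neq0$. As $(\tilde u_j,\tilde v_j):=(u_{2j-1},u_{2j})$ is a bijection between the tuples describing the two families of circuits, the stated equivalence follows; the hypothesis $n\geq3$ — which is in any case forced by $g(\Lambda_{2s,q})\geq 2s+4$ whenever either circuit exists — ensures the objects produced are genuine circuits.

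The main obstacle I expect is the bookkeeping of the reduction in the second paragraph: one must keep careful track of the parities of the orders $2n-j'-1$, $2n-j'-2$ and of the signs $(-1)^{t-1}$, $(-1)^{t}$ produced by Lemma~\ref{2n&n-cycle}, verify that after removing redundancies exactly the $2s+1$ equations $\tilde l_j^{(n+1)}=0$ together with $\tilde v_1+\cdots+\tilde v_n=0$ survive and nothing more, and confirm that the floor-valued index intervals appearing on the $\Lambda_{2s,q}$ side agree with the even/odd split of $j'$ on the $\Lambda_{4s+3,q}$ side. Everything else reduces to direct substitution into the formulas of Lemmas~\ref{path} and~\ref{2n&n-cycle}.
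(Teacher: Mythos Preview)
Your proposal is correct and follows essentially the same route as the paper: reduce both sides to systems of $\rho$-equations via Lemma~\ref{path}, collapse the $\Lambda_{4s+3,q}$ system through the identities of Lemma~\ref{2n&n-cycle}, and identify the result with the $\Lambda_{2s,q}$ system under the relabelling $(\tilde u_j,\tilde v_j)=(u_{2j-1},u_{2j})$. The paper executes the bookkeeping you flag as the main obstacle by splitting into the cases $s=2w$ even and $s=2w-1$ odd, which cleanly separates the parities of the indices $2n-j'-1$, $2n-j'-2$ and makes the matching with the $\Lambda_{4w,q}$ (respectively $\Lambda_{4w-2,q}$) circuit equations immediate; you will find this parity split the natural way to complete the details.
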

     \begin{proof}
     	Assume that $v_1,\ldots,v_{2n}\in\mathbb{F}_q^{\ast}$ satisfy (\ref{abs_v_eq1}).
     	According to Lemma~\ref{path}, the graph $\Lambda_{4s+3,q}$ has a backtrackless circuit of type $(u_1,v_1,\ldots,u_{2n},v_{2n})$ if and only if
     	\begin{align}
     		\begin{cases}\label{2n-cycle}
     			&\rho_{2n-j-2}(v_1,u_2,\ldots,v_{2n-1},u_{2n},v_{2n})=0,\\
     			&\rho_{2n-j-1}(u_1,v_1,\ldots,u_{2n},v_{2n})=0,\\
     			&\rho_{2n-j-2}(v_1,u_2,\ldots,v_{2n-1},u_{2n})=0,\\
     			&\rho_{2n-j-1}(u_1,v_1,\ldots,u_{2n-1},v_{2n-1},u_{2n})=0,
     		\end{cases}
     	    \text{ ~~for } 0\leq j\leq s.
     	\end{align}

     	If $s=2w$ is even, according to Lemma~\ref{2n&n-cycle} we see that (\ref{2n-cycle}) is equivalent to
     $\rho_{n-1}(u_1,\ldots,u_{2n-1})=\rho_{n-1}(u_2,\ldots,u_{2n})=0$ and
     	\begin{align}\label{s=2w}
     		\begin{cases}
     	  	   &\rho_{n-j-1}(u_1,\ldots,u_{2n-1})=0,\\
     		   &\rho_{n-j-1}(u_2,\ldots,u_{2n})=0,\\
     		   &\rho_{n-j}(u_1,\ldots,u_{2n})=0,\\
     		   &\rho_{n-j-1}(u_2,\ldots,u_{2n-1})=0,
     	   \end{cases}
           \text{ ~~~~for } 1\leq j\leq w,
     	\end{align}
        that is, the graph $\Lambda_{4w,q}$ has a backtrackless circuit of type $(u_1,\ldots,u_{2n})$ on account to Lemma~\ref{path}.

        If $s=2w-1$ is odd, according to Lemma~\ref{2n&n-cycle} we see that (\ref{2n-cycle}) is equivalent to
        \begin{align}
        	\begin{cases}\label{s=2w-1}
        		&\rho_{n-j}(u_1,\ldots,u_{2n})=0,\\
        		&\rho_{n-j-1}(u_2,\ldots,u_{2n-1})=0,\\
        		&\rho_{n-j}(u_1,\ldots,u_{2n-1})=0,\\
        		&\rho_{n-j}(u_2,\ldots,u_{2n})=0,
        	\end{cases}
        	\text{ ~~~~for } 1\leq j\leq w.
        \end{align}
        that is, the graph $\Lambda_{4w-2,q}$ has a backtrackless circuit of type $(u_1,\ldots,u_{2n})$ on account to Lemma~\ref{path}.
     \end{proof}

Based on the existence of backtrackless circuits of type $(u_1,v_1,\ldots,u_{2n},v_{2n})$ with
     	(\ref{abs_v_eq1}), on the girth of $\Lambda_{k,q}$ one can show the following theorem by using Lemmas~\ref{path}, \ref{2n&n-cycle} and \ref{cycle_2s&4s+3}.

     \begin{theorem}\label{relation2s&4s+3(4)}
     	Assume $n\geq 3$ and $s,w\geq 1$.
     	\begin{enumerate}
     		\item If $g(\Lambda_{2s,q})\leq 2n$, then $g(\Lambda_{4s+3,q})\leq 4n$.
     		\item If $g(\Lambda_{2n-4,q})=2n$, then $g(\Lambda_{4n-5,q})=4n$.
     	    \item If $g(\Lambda_{4w-2,q})=4w+2$, then $g(\Lambda_{8w,q})=8w+4$.
            \item If $q$ is a power of 2 and $g(\Lambda_{4w,q})\leq 2n$, then $g(\Lambda_{8w+4,q})\leq 4n$.
      	\end{enumerate}
     \end{theorem}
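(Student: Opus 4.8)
The plan is to prove the four parts in order, since (2) follows at once from (1) and parts (3), (4) are obtained by pushing the argument of (1)--(2) one step further.

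\emph{Parts (1) and (2).} Because $\Lambda_{2s,q}$ has girth at least $2s+4\ge 6$, the hypothesis $g(\Lambda_{2s,q})\le 2n$ in (1) yields a backtrackless circuit of $\Lambda_{2s,q}$ of some even length $2m$ with $3\le m\le n$; writing its type as $(u_1,\dots,u_{2m})$ and applying Lemma~\ref{cycle_2s&4s+3} (its hypotheses $s\ge 1$, $m\ge 3$ are met) produces a backtrackless circuit of type $(u_1,v_1,\dots,u_{2m},v_{2m})$ satisfying (\ref{abs_v_eq1}) and of length $4m$ in $\Lambda_{4s+3,q}$, so $g(\Lambda_{4s+3,q})\le 4m\le 4n$. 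For (2), apply (1) with $s=n-2$ to get $g(\Lambda_{4n-5,q})\le 4n$; since $\Lambda_{4n-5,q}$ is bipartite its girth is even, while the general bound gives it $\ge (4n-5)+4=4n-1$, hence $\ge 4n$, and therefore $=4n$.

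\emph{Part (3).} Set $n=2w+1$, so $2n=4w+2$ and the hypothesis says $\Lambda_{4w-2,q}$ has a backtrackless circuit of length $2n$. By (2) with this $n$, $g(\Lambda_{8w-1,q})=4n=8w+4$, and tracing the odd case of Lemma~\ref{cycle_2s&4s+3} (with $s=2w-1$) one such shortest circuit of $\Lambda_{8w-1,q}$ has type $(u_1,v_1,\dots,u_{2n},v_{2n})$ obeying (\ref{abs_v_eq1}), with $(u_1,\dots,u_{2n})$ the type of the given circuit in $\Lambda_{4w-2,q}$. Since $g(\Lambda_{8w,q})\ge 8w+4$ is automatic, it remains to show this circuit is also a circuit in $\Lambda_{8w,q}$, i.e.\ by Lemma~\ref{path} that its single extra coordinate $l_{8w}^{(2n+1)}$ is zero. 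By (\ref{4j}) that coordinate is $\rho_{2w+1}(u_1,v_1,\dots,u_{2n-1},v_{2n-1},u_{2n})$, which by Lemma~\ref{2n&n-cycle} equals $(-1)^{w}\bigl(\rho_{w}(u_1,\dots,u_{2n-1})+\rho_{w}(u_2,\dots,u_{2n})\bigr)$; thus (3) reduces to the identity
\[
\rho_{w}(u_1,\dots,u_{2n-1})+\rho_{w}(u_2,\dots,u_{2n})=0
\]
for the type $(u_1,\dots,u_{2n})$ of a (shortest) circuit of $\Lambda_{4w-2,q}$. I would prove this by repeatedly applying the recursion (\ref{rhoproperty}) at both ends of each $\rho_{w}$ and substituting the vanishing relations that Lemma~\ref{path} attaches to a $\Lambda_{4w-2,q}$-circuit --- namely $\rho_s(u_2,\dots,u_{2n-1})=0$ for $w\le s\le 2w-1$; $\rho_s(u_1,\dots,u_{2n-1})=\rho_s(u_1,\dots,u_{2n})=\rho_s(u_2,\dots,u_{2n})=0$ for $w+1\le s\le 2w$; and $u_2+u_4+\dots+u_{2n}=0$ --- until the two terms cancel. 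Getting this cancellation to go through, presumably by using the rigidity of shortest cycles (cf.\ \cite{XCT22}), is what I expect to be the main obstacle of the theorem.

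\emph{Part (4).} I would argue as in (3), now starting from $\Lambda_{4w,q}$ and using the even case of Lemma~\ref{cycle_2s&4s+3} (with $s=2w$), which runs through $\Lambda_{8w+3,q}$: a shortest circuit of $\Lambda_{4w,q}$ of length $2m\le 2n$ (so $m\ge 3$) yields a circuit of type $(u_1,v_1,\dots,u_{2m},v_{2m})$ with (\ref{abs_v_eq1}) and length $4m$ in $\Lambda_{8w+3,q}$, giving $g(\Lambda_{8w+3,q})\le 4n$; one then upgrades it to a circuit of length $4m\le 4n$ in $\Lambda_{8w+4,q}$ by checking that its single extra coordinate $l_{8w+4}^{(2m+1)}=\rho_{2m-2w-2}(u_1,v_1,\dots,u_{2m-1},v_{2m-1},u_{2m})$ vanishes. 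This is exactly where ``$q$ a power of $2$'' is used: in characteristic $2$ the relations (\ref{abs_v_eq1}) collapse to $v_1=\dots=v_{2m}=1$ and all signs in Lemma~\ref{2n&n-cycle} disappear, so that coordinate reduces to $\rho_{m-w-1}(u_1,\dots,u_{2m})+\rho_{m-w-2}(u_2,\dots,u_{2m-1})$, which I would show vanishes from the $\Lambda_{4w,q}$-circuit relations via (\ref{rhoproperty}) exactly as in (3), the disappearance of signs doing the bookkeeping. Verifying this last identity is again the crux.
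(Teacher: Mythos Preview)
Your treatment of (1) and (2) is correct and essentially the paper's argument.

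For (3), however, there is a genuine gap: the identity
\[
\rho_{w}(u_1,\dots,u_{2n-1})+\rho_{w}(u_2,\dots,u_{2n})=0
\]
does \emph{not} follow from the $\Lambda_{4w-2,q}$-circuit relations, and no amount of unwinding (\ref{rhoproperty}) will make it cancel. Indeed, the circuit relations (your list, which is (\ref{s=2w-1})) only force $\rho_{n-j}(u_1,\dots,u_{2n-1})$ and $\rho_{n-j}(u_2,\dots,u_{2n})$ to vanish for $1\le j\le w$; the two terms you need are at $j=w+1$, and the paper in fact shows they are both \emph{nonzero}: if either vanished, $(u_1,\dots,u_{2n})$ or its reverse would already be a circuit of length $2n=4w+2$ in $\Lambda_{4w-1,q}$, contradicting $g(\Lambda_{4w-1,q})\ge 4w+4$. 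The paper's key idea, which your proposal misses, is to \emph{modify} the circuit rather than compute with the given one: set $\alpha=-\rho_{w}(u_2,\dots,u_{2n})/\rho_{w}(u_1,\dots,u_{2n-1})$ and replace each odd-indexed $u_{2s-1}$ by $\alpha u_{2s-1}$. By the homogeneity relations (\ref{0b3})--(\ref{0b4}), every equation in (\ref{s=2w-1}) is merely multiplied by a power of $\alpha$ and so still holds, while the two degree-$(2w{+}1)$ quantities $\rho_w(u_1,\dots,u_{2n-1})$ and $\rho_w(u_2,\dots,u_{2n})$ are multiplied by $\alpha^{w+1}$ and $\alpha^{w}$ respectively, forcing their sum to zero by the choice of $\alpha$. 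This scaling trick is the heart of (3).

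For (4) you have the right target, namely $\rho_{m-w-1}(u_1,\dots,u_{2m})=\rho_{m-w-2}(u_2,\dots,u_{2m-1})$ in characteristic $2$, but the mechanism is not ``the same as (3)'' (which, as above, does not work). The paper instead applies the algebraic identity of Lemma~\ref{lem4j+1} to the sequence $u_1,u_2,\dots,u_{2m}$ (with the odd- and even-indexed $u$'s playing the roles of the $u$'s and $v$'s there). Writing $\nabla_t,\Delta_t$ for the corresponding quantities, the $\Lambda_{4w,q}$-circuit relations (\ref{s=2w}) say exactly $\nabla_t=\Delta_t=0$ for $1\le t\le 2w+1$, and Lemma~\ref{lem4j+1} with $j=w+1$ then gives
\[
\Delta_{2w+2}-\nabla_{2w+2}=2\Delta_{2w+2}+\sum_{s=1}^{2w+1}(-1)^s\nabla_s\Delta_{2w+2-s}=0
\]
in characteristic $2$. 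That is where ``$q$ a power of $2$'' is really used: it kills the term $2\Delta_{2w+2}$, not merely the signs in Lemma~\ref{2n&n-cycle}.
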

     \begin{proof}
     	The first statement follows simply from Lemma \ref{cycle_2s&4s+3}. Furthermore, the second statement follows immediately on account to $g(\Lambda_{4n-5,q})\geq 4n$.
     	
     	To show the third statement, we set $n=2w+1$ and assume that the graph $
     	\Lambda_{4w-2,q}$ has a backtrackless circuit of type $(u_1,\ldots,u_{2n})$. Then, according to Lemma~\ref{path} we have (\ref{s=2w-1}). If $\rho_{n-w-1}(u_2,\ldots,u_{2n})=0$, then $\Lambda_{4w-1,q}$ has a backtrackless circuit of type $(u_1,\ldots,u_{2n})$, contradicts  $g(\Lambda_{4w-1,q})\geq 4w+4>2n$.
     	If $\rho_{n-w-1}(u_1,\ldots,u_{2n-1})=0$, then $\Lambda_{4w-1,q}$ has a backtrackless circuit of type $(u_{2n},\ldots,u_1)$, contradicts  $g(\Lambda_{4w-1,q})\geq 4w+4>2n$ too. Hence, we have $\rho_{n-w-1}(u_2,\ldots,u_{2n})\rho_{n-w-1}(u_1,\ldots,u_{2n-1})\neq 0$. Let
     	$$\alpha=-\rho_{n-w-1}(u_2,\ldots,u_{2n})/\rho_{n-w-1}(u_1,\ldots,u_{2n-1}).$$
     	We multiply the entries with odd indices in the tuple $(u_1,\ldots,u_{2n})$ by $\alpha$, and denote the resulting tuple by the same notation. Then, according to (\ref{0b3}) and (\ref{0b4}) one can check easily that the modified tuple $(u_1,\ldots,u_{2n})$ satisfies (\ref{s=2w-1}) and
     	$$\rho_{n-w-1}(u_2,\ldots,u_{2n})+\rho_{n-w-1}(u_1,\ldots,u_{2n-1})=0.$$
     	Hence, according to Lemma \ref{2n&n-cycle} we have
     	$$\rho_{2n-2w-1}(u_1,v_1,\ldots,u_{2n-1},v_{2n-1},u_{2n})=0$$
     	and (\ref{2n-cycle}) with $s=2w-1$, where the tuple $(v_1,\ldots,v_{2n})$ satisfies (\ref{abs_v_eq1}). Therefore, according to Lemma~\ref{path} we see $\Lambda_{8w,q}$ has a backtrackless circuit of type $(u_1,v_1,\ldots,u_{2n},v_{2n})$ and thus we have $g(\Lambda_{8w,q})\leq 4n=8w+4$. Hence, from $g(\Lambda_{8w,q})\geq 8w+4$ we see $g(\Lambda_{8w,q})=8w+4$.

     To show the last statement, we assume that $q$ is a power of 2 and that the graph $\Lambda_{4w,q}$ has a backtrackless circuit of type $(u_1,\ldots,u_{2n})$.
     Then, according to Lemma~\ref{path} we have
     $$\rho_{n-1}(u_1,\ldots,u_{2n-1})=\rho_{n-1}(u_2,\ldots,u_{2n})=0$$
     and (\ref{s=2w}), that is,
     $\nabla^n_{t}=\Delta^n_{t}=0$ holds for $1\leq t\leq 2w+1$ when we modify accordingly the definition of the notations $\nabla^n_{t}$, $\Delta^n_{t}$.
     Therefore, according to Lemma~\ref{lem4j+1} and that the characteristic of $\mathbb{F}_q$ is 2, we see
     \begin{align*}
     &\rho_{n-w-2}(u_2,\ldots,u_{2n-1})-\rho_{n-w-1}(u_1,\ldots,u_{2n})\\
     =&\Delta^n_{2w+2}-\nabla^n_{2w+2}\\
     =&2\Delta^n_{2w+2}+\sum_{s=1}^{2w+1}(-1)^{s}\nabla^n_{s}\Delta^n_{2w+2-s}=0.
     \end{align*}
     Hence, according to Lemma \ref{2n&n-cycle} we have
     $$
     \rho_{2n-2w-2}(u_1,v_1,\ldots,u_{2n-1},v_{2n-1},u_{2n})=0
     $$
     	and (\ref{2n-cycle}) with $s=2w$, where the tuple $(v_1,\ldots,v_{2n})$ satisfies (\ref{abs_v_eq1}). Therefore, according to Lemma~\ref{path} we see $\Lambda_{8w+4,q}$ has a backtrackless circuit of type $(u_1,v_1,\ldots,u_{2n},v_{2n})$ and thus we have $g(\Lambda_{8w+4,q})\leq 4n$.
     \end{proof}

     \begin{exam}
        \begin{itemize}
        \item For $k\geq 2$, the girth of $\Lambda_{k,2}$ has been determined \cite{XWY14}: $g(\Lambda_{k,2})=2^{s}$, where $s$ is the integer with $2^{s-1}-4<k\leq 2^{s}-4$.
        	\item Suppose $q\geq3$. According to Theorem~\ref{relation2s&4s+3(4)}, from $g(\Lambda_{2,q})=6$ \cite{XWY14} we see $g(\Lambda_{7,q})=g(\Lambda_{8,q})=12$, $g(\Lambda_{18,q})\leq g(\Lambda_{19,q})=24$, $g(\Lambda_{38,q})\leq g(\Lambda_{39,q})\leq 48$, $g(\Lambda_{78,q})\leq g(\Lambda_{79,q})\leq 96$ and $g(\Lambda_{159,q})\leq 192$.
        	\item Suppose $q>3$. According to Theorem~\ref{relation2s&4s+3(4)}, from $g(\Lambda_{4,q})=8$ \cite{XCT22} we see $g(\Lambda_{11,q})=16$.
        	According to Theorem~\ref{4t+1&4t+2}, from $g(\Lambda_{5,q})=10$ \cite{XCT22} we see $g(\Lambda_{6,q})=10$ and then, according to Theorem~\ref{relation2s&4s+3(4)}, we have
        $g(\Lambda_{15,q})=g(\Lambda_{16,q})=20$ and $g(\Lambda_{35,q})=40$.
        \end{itemize}
     \end{exam}

When the characteristic of $\mathbb{F}_q$ is 2, one can deduce further the following corollary easily.
\begin{corollary}\label{cor00}
Assume $g(\Lambda_{2s,q})=2s+4$, where $q$ is a power of 2 and $s\geq 1$. Then, for any $t\geq 1$ we have
     \begin{align*}
         g(\Lambda_{2^{t}(s+2)-4,q})=g(\Lambda_{2^{t}(s+2)-5,q})=2^{t}(s+2),
     \end{align*}
     where $\Lambda_{1,q}$ is defined as a graph isomorphic to $\Lambda_{2,q}$ for convenience.
\end{corollary}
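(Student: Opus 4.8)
The plan is to reduce everything, by induction on $t$, to the single statement $g(\Lambda_{2^{t}(s+2)-4,q})=2^{t}(s+2)$; the companion equality for $\Lambda_{2^{t}(s+2)-5,q}$ will then follow from a short monotonicity-and-parity argument. Write $N_t=2^{t}(s+2)$ and $k_t=N_t-4$, and observe the recursion $k_{t+1}=2k_t+4$. The base case $t=1$ is precisely the hypothesis, since $k_1=2s$ and $g(\Lambda_{2s,q})=2s+4=N_1$.

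For the inductive step, suppose $g(\Lambda_{k_t,q})=k_t+4$. Since $t\ge 1$, $k_t$ is even, so $k_t\equiv 0$ or $k_t\equiv 2\pmod 4$. If $k_t=4w-2$ with $w\ge 1$, then $g(\Lambda_{4w-2,q})=k_t+4=4w+2$, so the third statement of Theorem~\ref{relation2s&4s+3(4)} gives $g(\Lambda_{8w,q})=8w+4$; since $8w=2k_t+4=k_{t+1}$ and $8w+4=k_{t+1}+4$, this is exactly the desired conclusion. If instead $k_t=4w$ with $w\ge 1$, then $g(\Lambda_{4w,q})=k_t+4=4w+4$, and because $q$ is a power of $2$ the fourth statement of Theorem~\ref{relation2s&4s+3(4)}, applied with $2n=4w+4$ (so $n=2w+2\ge 3$), yields $g(\Lambda_{8w+4,q})\le 8w+8$; combining this upper bound with the universal lower bound $g(\Lambda_{8w+4,q})\ge (8w+4)+4=8w+8$ and noting $8w+4=k_{t+1}$ gives $g(\Lambda_{k_{t+1},q})=k_{t+1}+4$. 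In either case the induction advances, and $g(\Lambda_{k_t,q})=N_t$ holds for all $t\ge 1$.

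It remains to move from $\Lambda_{k_t,q}$ to $\Lambda_{k_t-1,q}=\Lambda_{N_t-5,q}$. If $(t,s)=(1,1)$ then $N_t-5=1$, and by the stated convention together with the hypothesis $g(\Lambda_{1,q})=g(\Lambda_{2,q})=g(\Lambda_{2s,q})=6=N_1$. Otherwise $k_t-1\ge 2$, and I will use that $g(\Lambda_{m,q})\le g(\Lambda_{m+1,q})$ for $m\ge 2$: projecting a backtrackless circuit of $\Lambda_{m+1,q}$ onto the first $m+1$ coordinates produces a backtrackless circuit of $\Lambda_{m,q}$ of the same length, backtracklessness being preserved because consecutive left vertices, and consecutive right vertices, of a backtrackless walk have distinct first coordinates. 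This gives $g(\Lambda_{k_t-1,q})\le g(\Lambda_{k_t,q})=k_t+4$; on the other hand $g(\Lambda_{k_t-1,q})\ge (k_t-1)+4=k_t+3$, and since $\Lambda_{k_t-1,q}$ is bipartite its girth is even while $k_t+3$ is odd (as $k_t$ is even), forcing $g(\Lambda_{k_t-1,q})\ge k_t+4$. Hence $g(\Lambda_{N_t-5,q})=k_t+4=N_t$, finishing the proof.

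The only real content is bookkeeping. The key observation is that the index sequence $2^{t}(s+2)-4$ obeys the doubling rule $k\mapsto 2k+4$, and that this doubling is matched exactly by the two construction results in Theorem~\ref{relation2s&4s+3(4)} once one splits on $k\bmod 4$: the residue $k\equiv 2\pmod 4$ feeds the third statement (no hypothesis on the characteristic is needed there), while $k\equiv 0\pmod 4$ feeds the fourth statement, where being in characteristic $2$ is essential — which is why the corollary assumes $q$ is a power of $2$. I expect the main point to be careful about is precisely this mod-$4$ dichotomy and verifying the side conditions $w\ge 1$ and $n\ge 3$ in each invocation; the passage to the odd index $N_t-5$ is then routine from monotonicity and parity, and the degenerate case $(t,s)=(1,1)$ is absorbed by the $\Lambda_{1,q}\cong\Lambda_{2,q}$ convention.
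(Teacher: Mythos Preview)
Your proof is correct and follows essentially the same approach as the paper's: induction on $t$ for the even-index equality $g(\Lambda_{2^t(s+2)-4,q})=2^t(s+2)$ via Theorem~\ref{relation2s&4s+3(4)}, then the odd-index equality from monotonicity and the lower bound $g(\Lambda_{k,q})\ge k+4$ (with parity). The paper compresses the inductive step into a single sentence, whereas you spell out the $k_t\bmod 4$ dichotomy (statement~3 for $k_t\equiv 2$, statement~4 for $k_t\equiv 0$, the latter being where the characteristic-$2$ hypothesis is used) and check the side conditions $w\ge 1$, $n\ge 3$; this is exactly the content hidden behind the paper's ``we see easily''.
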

\begin{proof}
From $g(\Lambda_{k,q})\geq k+4$ and Theorem~\ref{relation2s&4s+3(4)}, we see easily that $g(\Lambda_{2^{t}(s+2)-4,q})$ $=2^{t}(s+2)$ is valid for any $t\geq 1$.
Furthermore, $g(\Lambda_{2^{t}(s+2)-5,q})=2^{t}(s+2)$ follows from $2^{t}(s+2)\leq g(\Lambda_{2^{t}(s+2)-5,q})\leq g(\Lambda_{2^{t}(s+2)-4,q})$.
\end{proof}

\begin{exam}
Assume that $q\geq 4$ is a power of 2. According to Corollary~\ref{cor00}, we have the following three statements.
\begin{itemize}
  \item From $g(\Lambda_{2,q})=6$, we see $g(\Lambda_{2^{t}3-4,q})=g(\Lambda_{2^{t}3-5,q})=2^{t}3$ for $t\geq 1$.
  \item From $g(\Lambda_{4,q})=8$, we see $g(\Lambda_{2^{t+2}-4,q})=g(\Lambda_{2^{t+2}-5,q})=2^{t+2}$ for $t\geq 1$.
  \item From $g(\Lambda_{6,q})=10$, we see $g(\Lambda_{2^{t}5-4,q})=g(\Lambda_{2^{t}5-5,q})=2^{t}5$ for $t\geq 1$.
\end{itemize}
\end{exam}

     For prime $p$, we write $m|_pn$ if $m|(np^r)$ for some $r\geq 0$. The following lemma is from \cite{XWY16}.
     \begin{lemma}\label{g(Lambdakq)=k+5}
     	For $q=p^s$ and $t\geq 1$ with $(t+2)|_p(q-1)$,
     \begin{align}\label{v0}
       g(\Lambda_{2t-1,q})=g(\Lambda_{2t,q})=2t+4.
     \end{align}
     \end{lemma}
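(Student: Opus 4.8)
The plan is to reduce the claim to building one short circuit and then to construct that circuit from a root of unity. Because $\Lambda_{k,q}$ is bipartite, every backtrackless circuit has even length, so $g(\Lambda_{k,q})\geq k+4$ already gives $g(\Lambda_{2t-1,q})\geq 2t+4$ and $g(\Lambda_{2t,q})\geq 2t+4$. Deleting the last coordinate of each vertex is a graph homomorphism $\Lambda_{2t,q}\to\Lambda_{2t-1,q}$: it fixes the colors $l_0,r_0$, maps $L_{2t},R_{2t}$ into $L_{2t-1},R_{2t-1}$, and preserves adjacency since the relations~(\ref{adj_con}) for $\Lambda_{2t-1,q}$ are a subfamily of those for $\Lambda_{2t,q}$. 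As the colors alone determine the type of a walk, and backtracklessness of a walk of type $(u_1,v_1,\ldots)$ is expressed only through them (via $u_j+u_{j+1}\neq 0$, $v_j+v_{j+1}\neq 0$), this homomorphism carries a backtrackless circuit of type $(u_1,v_1,\ldots,u_{t+2},v_{t+2})$ in $\Lambda_{2t,q}$ to one of the same type in $\Lambda_{2t-1,q}$. Hence it suffices to produce one backtrackless circuit of length $2t+4$ in $\Lambda_{2t,q}$: its image settles $\Lambda_{2t-1,q}$, and both girths are then pinched to $2t+4$.

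By Lemma~\ref{path} with $i=t+2$, such a circuit in $\Lambda_{2t,q}$ is the same as a tuple $(u_1,v_1,\ldots,u_{t+2},v_{t+2})$ over $\mathbb{F}_q^{\ast}$ with $v_1+\cdots+v_{t+2}=0$, with $u_j+u_{j+1}\neq 0\neq v_j+v_{j+1}$, and with $l^{(t+3)}_j=0$ for $0\leq j\leq 2t$; after setting $y_{t+3}=0$ in~(\ref{4j})--(\ref{4j+3}), the last family becomes a finite system of equations $\rho_s(\cdots)=0$, $s$ in a consecutive range, over the sequences $(u_1,v_1,\ldots,u_{t+2},v_{t+2})$, $(u_1,v_1,\ldots,u_{t+2})$, $(v_1,u_2,\ldots,v_{t+2})$ and $(v_1,u_2,\ldots,u_{t+2})$. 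Write $t+2=p^am$ with $p\nmid m$; since $\gcd(q-1,p)=1$, the hypothesis $(t+2)|_p(q-1)$ is equivalent to $m\mid q-1$, so $\mathbb{F}_q^{\ast}$ contains a primitive $m$-th root of unity $\zeta$. When $m=1$ the hypothesis merely says $t+2$ is a power of $p$, which is the case already treated in~\cite{XWY14} (its cycle has a different shape), so I would quote that; assume $m>1$. Then I would use the ansatz in which each $u_j$, $v_j$ is a fixed nonzero scalar times $\zeta$ to an exponent affine in $j$, with slopes that are units modulo $m$ and avoid $\zeta^{\mathrm{slope}}=-1$ (so $u_j+u_{j+1},v_j+v_{j+1}\neq 0$), and with the scalars and slopes tuned so that, when each $\rho_s(\cdots)$ is unwound through~(\ref{rhoproperty}) and~(\ref{0b0})--(\ref{0b1}), the system collapses to the vanishing of complete sums $\sum_{j=0}^{t+1}\zeta^{cj}=p^a\sum_{j=0}^{m-1}\zeta^{cj}$, which are $0$ in $\mathbb{F}_q$ (zero inner sum when $m\nmid c$, zero prefactor when $a\geq 1$, and the remaining $a=0$, $m\mid c$ case excluded by the slope choice). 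Equivalently, one may build a length-$2m$ circuit in $\Lambda_{2m-5,q}$ from $\zeta$ as in~\cite{Fredi95} and then inflate it by a factor $p^a$, the block repetition being harmless by a characteristic-$p$ argument.

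The main obstacle is exactly this last bookkeeping: the equations $\rho_s(\cdots)=0$ couple the $u$'s and the $v$'s, so one must verify that after the collapse nothing non-cyclotomic survives and that none of the ``bad'' complete sums (the $a=0$, $m\mid c$ ones, which equal a nonzero multiple of $m$) occur, while simultaneously keeping the colors separated enough that $u_j+u_{j+1}\neq 0\neq v_j+v_{j+1}$ and the walk does not close before step $2t+4$. Producing an exponent pattern that meets all of these at once — delicate for small $m$ and in small characteristic, where the naive constant-ratio choice fails — is the technical heart of the proof; once it is fixed, checking that the resulting closed walk is a backtrackless circuit of length exactly $2t+4$ is routine.
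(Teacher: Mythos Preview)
The paper does not prove this lemma at all: it is simply quoted from~\cite{XWY16} with the sentence ``The following lemma is from~\cite{XWY16}.'' So there is no in-paper argument to compare against; your proposal is an attempt to reconstruct the proof of a cited result.

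As a reconstruction, your outline is in the right spirit (lower bound from $g(\Lambda_{k,q})\geq k+4$; upper bound by exhibiting a circuit built from a root of unity, with a $p$-power ``inflation'' to handle the factor $p^a$ in $t+2$), but it is not a proof: you yourself flag the ``main obstacle'' --- choosing exponents so that all the $\rho_s(\cdots)$ collapse simultaneously and no bad complete sum survives --- as unfinished. That is precisely the content of~\cite{XWY16}, and it is not a routine check; the paper you are reading does not redo it, and neither does your proposal. A couple of smaller points: backtracklessness in $\Lambda_{k,q}$ is the condition $u_j\neq 0$, $v_j\neq 0$ (see the paragraph after~(\ref{relationxy})), not $u_j+u_{j+1}\neq 0$, $v_j+v_{j+1}\neq 0$; and the worry that the walk might ``close before step $2t+4$'' is moot, since the lower bound $g(\Lambda_{2t,q})\geq 2t+4$ already rules out any shorter closed backtrackless walk.
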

     	
     The following theorem follows simply from Theorem~\ref{relation2s&4s+3(4)} and Lemma~\ref{g(Lambdakq)=k+5}.

     \begin{theorem}\label{theo50}
     Assume that $q$ is a power of odd prime $p$ and $s,t$ are positive integers with $(2t-1)|_p(q-1)$ and $2^s\|(q-1)$. Then, Conjecture~1 is valid when
$k=2^{s+2}(2t-1)-5$, i.e.
\begin{align}\label{new1}
 g(\Lambda_{2^{s+2}(2t-1)-5,q})=2^{s+2}(2t-1).
\end{align}
\end{theorem}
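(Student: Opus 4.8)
The plan is to reach $g(\Lambda_{2^{s+2}(2t-1)-5,q})=2^{s+2}(2t-1)$ by a short two-step chain: first pin down the (minimal possible) girth of a suitably chosen \emph{even}-indexed graph using only Lemma~\ref{g(Lambdakq)=k+5}, and then transfer this to the odd index $k=2^{s+2}(2t-1)-5$ through the second statement of Theorem~\ref{relation2s&4s+3(4)}. Throughout write $M=2t-1$, which is odd; the target becomes $g(\Lambda_{2^{s+2}M-5,q})=2^{s+2}M$.

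First I would record the arithmetic input $2^{s}M|_p(q-1)$: from $2^{s}\|(q-1)$ we get $2^{s}\mid(q-1)$, and by hypothesis $M\mid p^{r}(q-1)$ for some $r\ge 0$; since $M$ is odd it is coprime to $2^{s}$, whence $2^{s}M\mid p^{r}(q-1)$, that is, $2^{s}M|_p(q-1)$. I then apply Lemma~\ref{g(Lambdakq)=k+5} with its free parameter (denoted $\mu$ here to avoid a clash with $t$) chosen as $\mu:=2^{s}M-2$: the hypothesis $\mu+2=2^{s}M|_p(q-1)$ is exactly what was just verified, so, provided $\mu\ge 1$,
\begin{align*}
 g(\Lambda_{2^{s+1}M-4,q})=g(\Lambda_{2\mu,q})=2\mu+4=2^{s+1}M .
\end{align*}

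Next I would apply the second statement of Theorem~\ref{relation2s&4s+3(4)} with $n:=2^{s}M$, so that $2n-4=2^{s+1}M-4$ and $4n-5=2^{s+2}M-5$. The displayed equality is precisely the hypothesis $g(\Lambda_{2n-4,q})=2n$ of that statement, and its conclusion $g(\Lambda_{4n-5,q})=4n$ is the sought identity $g(\Lambda_{2^{s+2}M-5,q})=2^{s+2}M$. Both $\mu\ge 1$ and $n\ge 3$ hold for every admissible pair $(s,t)$ except $(s,t)=(1,1)$; that one exception is $k=3$, for which $g(\Lambda_{3,q})=8$ is already known from the small-$k$ analysis of \cite{XCT22}, so (\ref{new1}) holds there as well.

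The closest thing to an obstacle is purely organisational: one must guess the right even index to target, namely $2^{s+1}M-4$, and then check that the chosen $\mu$ and $n$ make the hypotheses of the two quoted results fall out exactly; the single genuinely arithmetic point is the coprimality step just used. Two minor nuisances remain. First, the degenerate value $\mu=0$ (equivalently $n=2$) is why $k=3$ has to be cited separately. Second, one might hope to climb instead by iterating the third statement of Theorem~\ref{relation2s&4s+3(4)}, but that stalls immediately, since a single application of it lands on an index divisible by $8$, to which it no longer applies; routing through Lemma~\ref{g(Lambdakq)=k+5} is what keeps the proof short. It is also worth noting that $2^{s}\|(q-1)$, as opposed to merely $2^{s}\mid(q-1)$, is precisely what places this $k$ outside the criterion $(k+5)/2|_p(q-1)$ of \cite{XWY16}: here $(k+5)/2=2^{s+1}(2t-1)$ and, $p$ being odd, $2^{s+1}\nmid(q-1)$ forces $2^{s+1}(2t-1)\nmid p^{r}(q-1)$ for all $r$, so the result is genuinely new.
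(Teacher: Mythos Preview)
Your argument is correct and follows the same two-step route as the paper: first obtain $g(\Lambda_{2^{s+1}(2t-1)-4,q})=2^{s+1}(2t-1)$ from Lemma~\ref{g(Lambdakq)=k+5} via $2^{s}(2t-1)|_p(q-1)$, and then feed this into the second item of Theorem~\ref{relation2s&4s+3(4)} with $n=2^{s}(2t-1)$. You are in fact slightly more careful than the paper, since you spell out the coprimality step behind $2^{s}(2t-1)|_p(q-1)$ and you isolate the boundary case $(s,t)=(1,1)$ (where $\mu=0$ and $n=2$), handling $k=3$ separately by citing \cite{XCT22}; the paper's proof tacitly passes over this degenerate instance.
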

     \begin{proof}
     Clearly, we have $2^{s}(2t-1)|_p(q-1)$. Hence, from Lemma~\ref{g(Lambdakq)=k+5} we see
     $$g(\Lambda_{2^{s+1}(2t-1)-5,q})=g(\Lambda_{2^{s+1}(2t-1)-4,q})=2^{s+1}(2t-1),$$
     therefore, from Theorem~\ref{relation2s&4s+3(4)} we see (\ref{new1}).
\end{proof}

We note that the result shown in this theorem is not included by Lemma~\ref{g(Lambdakq)=k+5}.
At the end of this section, we investigate the girth of $\Lambda_{k,3}$ for small $k$.

     \begin{exam}
     	\begin{itemize}
     		\item The positive integer $t$'s satisfying $(t+2)|_3(3-1)$ are $1,4,7,16,25,52,79,160,\ldots$. Then, according to Lemma~\ref{g(Lambdakq)=k+5} we have $g(\Lambda_{2,3})=6$, $g(\Lambda_{7,3})$ $=g(\Lambda_{8,3})=12$, $g(\Lambda_{13,3})=g(\Lambda_{14,3})=18$, $g(\Lambda_{31,3})=g(\Lambda_{32,3})=36$, $g(\Lambda_{49,3})=g(\Lambda_{50,3})=54$, $g(\Lambda_{103,3})=g(\Lambda_{104,3})=108$, $g(\Lambda_{157,3})=g(\Lambda_{158,3})=162$, $g(\Lambda_{319,3})=g(\Lambda_{320,3})=324$, $\ldots$.
     		\item From $g(\Lambda_{32,3})=36$, according to Theorem~\ref{relation2s&4s+3(4)} we see $g(\Lambda_{66,3})\leq g(\Lambda_{67,3})$ $=72$, $g(\Lambda_{134,3})\leq g(\Lambda_{135,3})\leq144$ and $g(\Lambda_{271,3})\leq288$.
     	    \item From $g(\Lambda_{104,3})=108$, according to Theorem~\ref{relation2s&4s+3(4)} we see $g(\Lambda_{211,3})=216$.
     	\end{itemize}
     \end{exam}

     The known results on the girth of $\Lambda_{k,3}$ for $2\leq k \leq 320$ are summarized in the following table.

     \begin{table}[h]\caption{Girth of $\Lambda_{k,3}$ for $2\leq k \leq 320$.}
     	\centering
     	\begin{tabular}{|c|c|c|c|c|c|c|c|c|c|}
     		\hline
     		2  & 3   & 4         & 5         & 6   & 7          & 8   & 13   & 14        & 19* \\
     		6  & 8   & 12        & 12        & 12  & 12         & 12  & 18   & 18         & 24  \\
     		\hline\hline
     		31 & 32 & 39*      & 49 & 50 & 67* & 79*       & 103   & 104 & 135*        \\
     		36 & 36  &$\leq 48$ & 54 & 54  & 72  & $\leq 96$ & 108   & 108  & $\leq 144$ \\
      		\hline\hline
      	    157  & 158 & 159*       & 211*       & 271*       & 319  & 320 &  &  & \\
      		162  & 162  & $\leq 192$ & 216        & $\leq 288$ & 324  & 324 &  &  & \\
      		\hline
     	\end{tabular}
     \end{table}
\noindent In this table, the mark * indicates the exact values or upper bounds of $g(\Lambda_{k,3})$ are obtained by the methods proposed in this paper. We note that the girth cycles of $\Lambda_{k,3}$ were determined in \cite{XCT22} for $3\leq k\leq 8$. In particular,
the results $g(\Lambda_{3,3})=8$ and $g(\Lambda_{k,3})=12$ for $4\leq k\leq 8$ can be found therein.

\section{Upper Bound of $g(\Lambda_{k,q})$}

    In this section, we manage to deduce an upper bound for the girth of $\Lambda_{k,q}$ for $q\geq 3$.

    Assume that $q\geq 3$ is a given prime power and the number of positive factors of $q-1$ is $n$.
    Let $k_1,k_2,\ldots$ be the odd integers in ascending order with $\frac{k_i+5}{2}|_p(q-1)$, where $p$ is the characteristic of $\mathbb{F}_q$.
    Let $i_0$ be the integer with $k_{i_0}=2q-5$.

    \begin{lemma}\label{lem-ub}
    For any $i\geq i_0$
    \begin{align}\label{ub00}
    k_{i+n}=pk_{i}+5p-5.
    \end{align}
    \end{lemma}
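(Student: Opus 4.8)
The plan is to work with the quantities $d_i:=(k_i+5)/2$, which are positive integers since each $k_i$ is odd, so that the asserted recursion $k_{i+n}=pk_i+5p-5$ becomes the equivalent clean statement $d_{i+n}=pd_i$. The first step is to pin down the index set. Since $\gcd(q-1,p)=1$, writing $d=p^{a}c$ with $p\nmid c$ one sees that $d\mid(q-1)p^{r}$ for some $r\geq 0$ holds precisely when $c\mid q-1$ (take $r=a$); hence the admissible values $d_1<d_2<\cdots$ are exactly the members of
\[
S=\{\,p^{a}c:a\geq 0,\ c\mid q-1\,\}
\]
lying in the range under consideration. In particular $d_{i_0}=(2q-5+5)/2=q\in S$, and $q$ is the least element of $S$ that is $\geq q$, all divisors of $q-1$ being at most $q-1$.

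The second step is to exploit the multiplicative structure of $S$. It is the disjoint union, over the $n$ divisors $c$ of $q-1$, of the geometric progressions $\{c,pc,p^{2}c,\dots\}$ (each truncated to the range in play), since by unique factorization $p^{a}c=p^{a'}c'$ with $c,c'\mid q-1$ forces $a=a'$ and $c=c'$. Consequently multiplication by $p$ is an order-preserving injection of $S$ into itself, and the complement $S\setminus pS$ consists of exactly $n$ elements, the least element of each truncated progression, each smaller than $pq$. Moreover, for each $c\mid q-1$ there is a unique power of $p$ carrying $c$ into $[q,pq)=[p^{m},p^{m+1})$, so $S$ has exactly $n$ elements in that interval; since $q=d_{i_0}$ is the least element of $S$ that is $\geq q$, these are $d_{i_0},d_{i_0+1},\dots,d_{i_0+n-1}$, whence $d_{i_0+n}\geq pq$ and, more generally, $d_{i+n}\geq pq$ for every $i\geq i_0$.

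The third step is a short counting identity. Fix $i\geq i_0$. As $d_{i+n}\geq pq=p^{m+1}$ exceeds every divisor of $q-1$, the value $d_{i+n}$ cannot be coprime to $p$, so $p\mid d_{i+n}$ and $d_{i+n}/p$ is again an element of $S$, being $\geq q$. The number of elements of $S$ not exceeding $d_{i+n}$ equals $i+n$; on the other hand that set is the disjoint union of $S\setminus pS$ (whose $n$ elements are all $<pq\leq d_{i+n}$) with $\{\,pe:e\in S,\ pe\leq d_{i+n}\,\}$, and $e\mapsto pe$ puts this last set in bijection with $\{\,e\in S:e\leq d_{i+n}/p\,\}$, whose cardinality is the index of $d_{i+n}/p$. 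Hence $d_{i+n}/p=d_i$, that is $d_{i+n}=pd_i$, and translating back yields $k_{i+n}=pk_i+5p-5$.

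The $\mid_p$ translation and the elementary counts are routine; I expect the one delicate point to be the range of validity. The $n$ generators forming $S\setminus pS$ need not all lie below $d_{i_0}=q$ --- when the sequence omits small admissible values, some of them can be nearly as large as $pq$ --- so the counting identity becomes correct only once $d_{i+n}$ has passed all of them, which is exactly the bound $i\geq i_0$ delivered by the interval count in the second step. That is where the care is needed, and it explains why the recursion is asserted precisely from $i=i_0$ onward.
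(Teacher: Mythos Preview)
Your argument is correct and rests on the same observation as the paper's: writing $d_i=(k_i+5)/2$, the admissible values from $d_{i_0}=q$ onward are exactly $S\cap[q,\infty)$ with $S=\{p^ac:a\geq0,\ c\mid q-1\}$, each interval $[qp^s,qp^{s+1})$ contains precisely $n$ of them, and multiplication by $p$ carries one block onto the next, so that $d_{i+n}=pd_i$. The paper establishes this by explicitly parametrizing the block: it first sorts the divisors of $q-1$ according to which interval $[p^j,p^{j+1})$ they lie in, and then writes
\[
\{k_{i_0+sn+i}:0\leq i<n\}=\bigcup_{0\leq j<m}\{2d_{j,t}\,p^{\,s+m-j}-5:1\leq t\leq t_j\},
\]
from which the recursion is read off directly. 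You instead reach the same conclusion by a counting argument on $|\{s\in S:s\leq d_{i+n}\}|$, which avoids naming the block elements.

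One small slip in exposition: you define $S$ as the full set $\{p^ac\}$ but later write ``the number of elements of $S$ not exceeding $d_{i+n}$ equals $i+n$'', which is literally correct only for the truncated enumeration $\{d_1,d_2,\ldots\}$ (the paper does not specify the lower cutoff for the $k_i$; e.g.\ for $q=3$ the values $1,2\in S$ are not among the $d_i$). This does no damage---since $d_{i+n}/p\geq q$, any fixed number of omitted small elements is counted identically on both sides of your identity and cancels---but it would be cleaner either to work with the truncated set throughout or to note explicitly that the offset cancels.
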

\begin{proof}
Suppose $q=p^m$.
For $0\leq j\leq m-1$, let $d_{j,1},\ldots,d_{j,t_j}$ denote the different factors of $q-1$ with $p^j\leq d_{j,t}<p^{j+1}$ for $t=1,\ldots,t_j$.
Then, we have $\sum_{j=0}^{m-1}t_j=n$ and for any $s\geq 0$ from $(p,d_{j,t})=1$ we see
\begin{align*}
  \left\{k_{i_0+sn+i}|0\leq i<n\right\}=\bigcup_{0\leq j< m}\left\{\left.2d_{j,t}p^{s+m-j}-5\right|1\leq t\leq t_j\right\},
\end{align*}
which implies (\ref{ub00}).
\end{proof}

Let
\begin{align}\label{ub01'}
  T_q=\max_{i\geq i_0}\frac{k_{i+1}+5}{k_{i}+5}.
\end{align}
From Lemma~\ref{lem-ub} we see that $T_q$ can also be given by
\begin{align}\label{ub01}
  T_q=\max_{i_0\leq i< i_0+n}\frac{k_{i+1}+5}{k_{i}+5}.
\end{align}
Clearly, $1<T_q<p$, and for any $i\geq i_0$ we have $\frac{k_{i+1}+5}{2}\leq T_q\frac{k_i+5}{2}$, i.e.
\begin{align}\label{ub02}
  k_{i+1}\leq T_qk_i+5T_q-5.
\end{align}
\begin{exam}
If $q=5^2$, then the positive factors of $5^2-1=24$ are $1,2,3,4,6,$ $8,12,24$ and the positive integers $t$ with $t|_524$ are
\begin{align*}
  1,2,3,4,5,6,8,10,12,15,20,24,25,30,40,50,60,75,100,120,125,\ldots
\end{align*}
Hence, $T_q=\max\{6/5,8/6,10/8,12/10,15/12,20/15,24/20,25/24\}=4/3$.
\end{exam}

From Lemma~\ref{g(Lambdakq)=k+5} we see $g(\Lambda_{k_i-1,q})\leq g(\Lambda_{k_i,q})= k_i+5$, hence from Theorem~\ref{relation2s&4s+3(4)} we see
$$g(\Lambda_{2(k_i-1)+2,q})\leq g(\Lambda_{2(k_i-1)+3,q})\leq 2(k_i+5)$$
    and by induction we have
    \begin{align}\label{ub05}
     g(\Lambda_{2^s(k_i+1)-2,q})\leq g(\Lambda_{2^s(k_i+1)-1,q})\leq 2^s(k_i+5),\text{ for any }s\geq 0.
    \end{align}

    \begin{theorem}
Let $q$ be a prime power.
\begin{enumerate}
  \item If $T_q\leq 2$, then for $k\geq q$ we have
\begin{align}
\label{ub10}
  g(\Lambda_{k,q})\leq T_q(k+4).
\end{align}
  \item If $T_q>2$ and $k\geq \max\{q,8T_q^2-10T_q-3\}$, then we have
    \begin{align}
    \label{ub11}
      g(\Lambda_{k,q})\leq 2k+4T_q+1.
    \end{align}
\end{enumerate}
\end{theorem}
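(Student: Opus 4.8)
The plan is to derive both statements from (\ref{ub05}), from the monotonicity $g(\Lambda_{k,q})\le g(\Lambda_{k+1,q})$ (which holds because a backtrackless circuit in $\Lambda_{k+1,q}$ restricts, along the first $k+1$ coordinates, to one of the same type in $\Lambda_{k,q}$), and from one structural remark about the sequence $k_1<k_2<\dots$\,: since $\tfrac{k_i+5}{2}$ runs exactly through the positive integers whose $p$-free part divides $q-1$, the counting in the proof of Lemma~\ref{lem-ub} applies verbatim in every size class $[p^j,p^{j+1})$ with $j\ge m-1$, so the ratios $\tfrac{k_{i+1}+5}{k_i+5}$ are $\le T_q$ not only for $i\ge i_0$ but whenever $k_i\ge 2(q/p)-5$. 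Equivalently, every interval $[L,U]$ with $L>q/p-2$ and $U/L\ge T_q$ contains a number of the form $\tfrac{k_i+5}{2}$.

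For statement~1 let $k\ge q$ and put $a=\min\{\,\tfrac{k_i+5}{2}:\tfrac{k_i+5}{2}\ge\tfrac{k+5}{2}\,\}$, so $k_i:=2a-5\ge k$ and, by Lemma~\ref{g(Lambdakq)=k+5} together with monotonicity, $g(\Lambda_{k,q})\le g(\Lambda_{k_i,q})=k_i+5=2a$. Let $a'$ be the previous admissible value; then $k\ge q$ forces $a>\tfrac{k+5}{2}\ge\tfrac{q+5}{2}>q/p$, hence $a'\ge q/p$ and therefore $a\le T_qa'$ by the structural remark. Moreover $a'$ is an integer strictly below $\tfrac{k+5}{2}$, so $2a'\le k+4$ in all cases (and $2a'\le k+3$ for odd $k$). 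Hence $g(\Lambda_{k,q})\le 2a\le 2T_qa'=T_q(2a')\le T_q(k+4)$. (Notice this argument never uses $T_q\le 2$; that hypothesis appears only because statement~2 gives the better bound when $T_q>2$.)

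For statement~2 the obstruction is that, since $T_q>2$, the choice $s=0$ in (\ref{ub05}) overshoots $k$ by a factor up to $T_q$, so one must combine the covering circuits for several values of $s$ at once. By (\ref{ub05}) and monotonicity, $\Lambda_{k,q}$ has girth $\le 2^{s+1}a_i$ whenever $a_i=\tfrac{k_i+5}{2}$ satisfies $a_i\ge\tfrac{k+2}{2^{s+1}}+2$, and this bound is $\le 2k+4T_q+1$ as soon as also $a_i\le\tfrac{2k+4T_q+1}{2^{s+1}}$; so it suffices to exhibit, for some $s\ge0$, an admissible value in
\[
 J_s:=\Bigl[\tfrac{k+2}{2^{s+1}}+2,\ \tfrac{2k+4T_q+1}{2^{s+1}}\Bigr].
\]
I would then analyse $\bigcup_{s\ge0}J_s$: consecutive $J_s$ overlap exactly while $2^{s}\le\tfrac{4T_q-3}{8}$, so the top intervals $J_0,\dots,J_{S+1}$ with $S=\lfloor\log_2\tfrac{4T_q-3}{8}\rfloor$ merge into a single block whose multiplicative width grows (with $k$) to a value of order $T_q$, while the remaining $J_s$ lie just below it, separated only by gaps whose ratios tend to $1$. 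Feeding this picture into the structural remark, and using the hypothesis $k\ge 8T_q^2-10T_q-3=(4T_q+1)(2T_q-3)$ to control the additive ``$+2$'' and ``$+4T_q$'' terms, forces one of the $J_s$ (either in the block or among the almost-touching lower intervals) to contain a value $\tfrac{k_i+5}{2}\ge q/p$, which then gives $g(\Lambda_{k,q})\le 2^{s+1}a_i\le 2k+4T_q+1$.

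The hard part is precisely this last interval analysis. Because $T_q>2$, no individual $J_s$ is guaranteed to be as wide as $T_q$, so the proof cannot be closed by a single application of the structural remark; one must bookkeep the merged block of $J_s$'s together with the exact sizes of the small gaps beneath it and the recurrence (with period dividing $\log p$, inherited from Lemma~\ref{lem-ub}) of the values $\tfrac{k_i+5}{2}$, and then calibrate all of this against $k\ge 8T_q^2-10T_q-3$. Obtaining the sharp threshold $8T_q^2-10T_q-3$, rather than merely a bound of order $T_q^2$, is where the care is needed; by contrast the reductions to (\ref{ub05}) and to monotonicity, and the verification in statement~1, are routine.
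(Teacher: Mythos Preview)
Your treatment of statement~1 is correct and essentially matches the paper's argument; your ``structural remark'' even patches a small sloppiness in the paper's phrasing (the assumption $i\ge i_0$ is not literally guaranteed for every $k\ge q$, but the ratio bound extends to $i<i_0$ exactly as you say).

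For statement~2, however, your plan is both incomplete and far more complicated than necessary. You propose to search over pairs $(s,a_i)$ for an admissible value in some $J_s$, then analyse $\bigcup_s J_s$, its merged block, the gaps beneath it, and the periodicity inherited from Lemma~\ref{lem-ub} --- and you explicitly leave this ``hard part'' undone. The paper bypasses all of this with one observation: instead of letting both $s$ and $a_i$ vary, \emph{fix} $a_i=(k_i+5)/2$ to be the admissible value just below $(k+5)/2$ (so $k_i<k<k_{i+1}$, $i\ge i_0$) and let $s$ be determined by it. If $k>k_{i+1}/2$ then already $g(\Lambda_{k,q})\le k_{i+1}+5\le 2k+4$. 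Otherwise take the unique $s$ with $2^s(k_i+1)\le k<2^{s+1}(k_i+1)$; then $(\ref{ub05})$ gives
\[
g(\Lambda_{k,q})\le 2^{s+1}(k_i+5)=2^{s+1}(k_i+1)+2^{s+3}\le 2k+2^{s+3},
\]
and from $2^s(k_i+1)\le k\le\tfrac12(k_{i+1}-1)\le\tfrac12(T_qk_i+5T_q-6)$ one reads off $2^{s+3}\le 4T_q+\tfrac{16T_q-24}{k_i+1}$. The hypothesis $k\ge 8T_q^2-10T_q-3$ then forces $T_qk_i\ge k_{i+1}-5T_q+5\ge (2k+1)-5T_q+5\ge 16T_q^2-25T_q$, hence $k_i+1\ge 16T_q-24$ and the fraction is $\le 1$. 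That is the whole proof: three lines of arithmetic, no interval covering, no merged blocks, no appeal to periodicity. In your language, the point you are missing is that the specific $a_i$ just below $(k+5)/2$ automatically lands in the correct $J_s$, so there is nothing to search for.
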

     \begin{proof}
Without loss of generality,  we assume $k_i<k< k_{i+1}$ for some $i\geq i_0$.

If $T_q\leq 2$, then from (\ref{ub02}) and Lemma~\ref{g(Lambdakq)=k+5} we see
\begin{align*}
  g(\Lambda_{k,q})\leq g(\Lambda_{k_{i+1},q})=k_{i+1}+5\leq T_q(k_i+5)\leq T_q(k+4),
\end{align*}
i.e. (\ref{ub10}) is valid.

Now we assume $T_q>2$ and $k\geq 8T_q^2-10T_q-3$.

    If $\frac{k_{i+1}}{2}< k\leq k_{i+1}$, then from Lemma~\ref{g(Lambdakq)=k+5} we have
    \begin{align}\label{ub12}
      g(\Lambda_{k,q})\leq g(\Lambda_{k_{i+1},q})=k_{i+1}+5\leq 2k+4.
    \end{align}

    If $k_i<k<\frac{k_{i+1}}{2}$, for the integer $s$ with
    \begin{align}\label{ub13}
     2^s(k_i+1)\leq k< 2^{s+1}(k_i+1)
    \end{align}
    from (\ref{ub02}) we see
    \begin{align*}
    2^s(k_i+1)\leq k\leq \frac{k_{i+1}-1}{2}\leq \frac{T_qk_i+5T_q-6}{2},
    \end{align*}
    and then from (\ref{ub05}) and (\ref{ub13}) we have
    \begin{align}\label{ub14}
      g(\Lambda_{k,q})\leq& g(\Lambda_{2^{s+1}(k_i+1)-1,q})\nonumber\\
      \leq& 2^{s+1}(k_i+5)\nonumber\\
      \leq& 2k+2^{s+3}\nonumber\\
      \leq& 2k+4T_q+\frac{16T_q-24}{k_i+1}.
    \end{align}
     From (\ref{ub02}) and $k\geq 8T_q^2-10T_q-3$ we see also
     $$T_qk_i+5T_q-5\geq k_{i+1}\geq 2k+1\geq 16T_q^2-20T_q-5$$
     and thus we have $(16T_q-24)/(k_i+1)\leq 1$.
     Therefore, from $T_q>1$, (\ref{ub12}) and (\ref{ub14}) we see that (\ref{ub11}) is valid.
     \end{proof}

\section{Concluding Remarks}
Conjecture~1 was shown to be valid in \cite{Fredi95} for the case $\frac{k+5}{2}|(q-1)$ based on the existence of a special automorphism of $D(k,q)$,
in \cite{XWY14} for the case $\frac{k+5}{2}$ is a power of $p$ based on the existence of backtrackless circuit of type $(1,1,\ldots,1,1)$,
and in \cite{XWY14} for the case $\frac{k+5}{2}|_p(q-1)$ based on the existence of backtrackless circuit of type $(1,1,b,b,\ldots,b^n,b^n)$, respectively,
where $p$ is the characteristic of $\mathbb{F}_q$. A few new results on the girth of $D(k,q)$ are obtained in the present paper based on the existence of backtrackless circuit of type
$(u_1,v_1,\ldots,u_{2n},v_{2n})$ with (\ref{abs_v_eq1}). For example, Conjecture~1 is shown to be valid in Theorem~\ref{theo50} for a new class of infinite pairs $(k,q)$: $p\geq 3$,
$k=2^{s+2}(2t-1)-5$ for positive integers $s,t$ with $2^s\|(q-1)$ and $(2t-1)|_p(q-1)$.
Almost the recent progresses made on the study of the girth of $D(k,q)$ rely heavily on the computation of the homogeneous polynomial $\rho_s(\omega_1,\ldots,\omega_n)$.
It is then of great interest to investigate the properties of these polynomials further in future.

\end{document}